\newtheorem{lemm}{Lemma}[section]
\newtheorem{coro}{Corollary}[section]
\newtheorem{prop}{Proposition}[section]
\newtheorem{theo}{Theorem}[section]
\theoremstyle{definition}
\newtheorem{rema}{Remark}[section]
\numberwithin{equation}{section}
\newtheorem*{ack}{Acknowledgements}
\def\Bbb{\mathbb}
\def\H{\mathbb H}
\def\hh{\H}
\def\S{\mathbb S}
\def\ms{\S}
\def\ep{\varepsilon}
\def\a{\alpha}
\def\fint{\operatorname {--\!\!\!\!\!\int\!\!\!\!\!--}}
\def\rn1{\Bbb R^{N+1}}
\def\rn{\Bbb R^{N}}
\def\a*{\alpha^*_\ep}
\def\wep{w^\ep}
\def\R{\mathbb R}
\def\rr{\R}
\newcommand{\lp}{\left(}
\newcommand{\rp}{\right)}
\def\N{\mathbb N}
\def\a{\mathfrak q}
\begin{document}

\title[Nonlocal diffusion on manifolds]{A nonlocal diffusion problem on  manifolds}

\author[Catherine Bandle, Maria del Mar Gonzalez, Marco A. Fontelos \& Noemi Wolanski]
{Catherine Bandle, Maria del Mar Gonzalez,\\ Marco A. Fontelos and Noemi Wolanski}

\address{ Noemi Wolanski
\hfill\break\indent Departamento  de Matem\'atica, FCEyN-UBA and IMAS, CONICET
\hfill\break\indent Ciudad Universitaria, Pabellon I (1428) Buenos Aires, Argentina.}
\email{{\tt
wolanski@dm.uba.ar}\hfill\break\indent {\em Web-page:} {\tt
http://mate.dm.uba.ar/$\sim$wolanski}}

\address{Catherine Bandle \hfill\break\indent
Department of Mathematics \hfill\break\indent
Spiegelgasse 1
CH-4051 Basel (Switzerland)}
\email{{\tt c.bandle@gmx.ch}}

\address{Marco A. Fontelos
\hfill\break\indent Instituto de Ciencias Matem\'aticas
\hfill\break\indent C/ Nicol\'as Cabrera 13-15, Campus de Cantoblanco, UAM, 28049 Madrid SPAIN}
\email{{\tt marco.fontelos@icmat.es
}\hfill\break\indent {\em Web-page:} {\tt
http://www.icmat.es/mafontelos}}

\address{Mar\'ia del Mar Gonz\'alez
\hfill\break\indent ETSEIB - Departamento de Matem\'aticas, Universidad Polit\'ecnica de Catalunya
\hfill\break\indent Av. Diagonal 647, Barcelona 08028, SPAIN}
\email{{\tt mar.gonzalez@upc.edu}\hfill\break\indent {\em Web-page:} {\tt
http://www.pagines.ma1.upc.edu/$\sim$mgonzalez/}}

\thanks{M. Fontelos is supported grant MTM2014-57158-R from the Spanish government. M.d.M. Gonzalez is part of the Catalan research group 2014 SGR 1083, and is supported by Spanish government grant MTM2014-52402-C3-1-P.
N. Wolanski is supported by ANPCyT PICT No. 2012-0153 and CONICET PIP 625 Res. 960/12.
N. Wolanski is a member of CONICET.}

\keywords{
\\
\indent 2000 {\it Mathematics Subject Classification.} }

\begin{abstract}
In this paper we study a nonlocal diffusion problem on
a manifold. These kind of equations can model diffusions when there are long range effects and have been widely studied in Euclidean space.
 We first prove existence and uniqueness of solutions and a comparison principle. Then, for a convenient rescaling we prove that the operator under consideration converges to a multiple of the usual Heat-Beltrami operator on the manifold. Next, we look at the long time behavior on compact manifolds by studying the spectral properties of the operator. Finally, for the model case of hyperbolic space we study the long time asymptotics and find a different and interesting behavior.
 \end{abstract}

\maketitle

\section{Introduction.}
In this paper we present  a nonlocal diffusion equation set on a manifold and study the properties of its solutions. More precisely, let $M$ be an $N-$dimensional, Riemannian manifold without boundary. We assume that there exists a family of isometries $\tau_x:M\to M$ such that $\tau_x(x)=O$ (were we have denoted by $O$ a fixed point in $M$). Let $d\mu$ be a measure on $M$,  invariant under the family of isometries $\tau_x$. Denote by $s_{xy}$ the geodesic distance between the points $x$ and $y$. For $J: \R_{\ge0}\to \R_{\ge0}$ normalized by
\begin{equation}\label{mass=1}
\int_M J(s_{xO})\, d\mu_x=1,
\end{equation}
 we define the operator
\begin{equation*}
Lu(x):=\int_M J(s_{xy})u(y)\,d\mu_y-u(x),
\end{equation*}
and consider the corresponding Cauchy problem
\begin{equation}\label{P}
 \begin{cases}
  u_t-Lu=0\qquad&\mbox{in }M\times(0,T],\\
u(x,0)=u_0(x)\qquad&\mbox{in }M.
 \end{cases}
\end{equation}
At some points of the article we  will assume further that $J\in C_0^\infty(\R_{\ge0})$.\\

These kind of diffusion problems have been widely studied in Euclidean space as they model dispersion when there are long range effects (see for instance, \cite{BZ, CF,Fi}).
Classical diffusion problems can be seen as infinitesimal limits of these nonlocal diffusion models. This is a very well studied fact from a probabilistic point of view (see, for instance \cite{D,Bertoin}) and has also a PDE counterpart \cite{CERW,CER}.

Other nonlocal models correspond to singular kernels and have also been  widely studied (we cite here \cite{CS,Vazquez:nonlocal}, for instance, but the literature is huge). This study is being extended to the manifold case, especially on non-compact manifolds such as hyperbolic space $\mathbb H^N$ (\cite{Banica-Gonzalez-Saez,Gonzalez-Saez-Sire}). Nevertheless, we will not consider singular kernels in this article.

In Euclidean space, the relation between jump diffusions as in \eqref{P} and the classical heat equation appears in two different asymptotic limits. On one hand, as we have referred to above, in the infinitesimal limit as jumps go to zero, on the other, as the large time asymptotics. In fact, when stated in $\R^N$ with $u_0\in L^1\cap L^\infty$,    the solution of \eqref{P} decays as $t\to\infty$ with the same rate as the one of the heat equation. Moreover, after properly rescaled, the limit profile is  the same as the one of the infinitesimal heat equation with diffusivity $\a=\frac1{2N}\int J(\xi)|\xi|^2\,d\xi$ and the same initial condition.

These two facts can be seen as the two sides of a coin. They are both a consequence of the fact that for $v\in C_0^\infty(\R^N)$ and $J_\ep(\xi)=\ep^{-N}J(\ep^{-1}\xi)$,
\[
\lim_{\ep\to0}\frac1{\ep^2}\int J_\ep(x-y)\big(v(y)-v(x)\big)\,dy= \a \Delta v(x).
\]
See, for instance, \cite{TW-critical} for this discussion.

In this paper, after the study of the well posedness of problem \eqref{P}, we address these kind of questions. We  are interested in studying if and how the geometry of $M$ influences the asymptotics of the nonlocal problem \eqref{P} both in the infinitesimal limit and in the large time behavior. In particular, in order to understand the influence of geometry at infinity we will concentrate on hyperbolic space. Note that (local) diffusions on hyperbolic space and on general noncompact manifolds have been considered by many authors (without being exhaustive, we cite \cite{Grigoryan:book,Saloff-Coste,Davies-Mandouvalos,Vazquez:hyperbolic,Bonforte-Grillo-Vazquez}).

Our first result on the infinitesimal limit states that, for radially symmetric manifolds $M$ the infinitesimal limit is the Heat-Beltrami problem (Theorem \ref{thm-radial}).

On the other hand, as can be expected, there is a big difference in the time asymptotics between compact and non-compact manifolds. Surprisingly, on  the $N-$dimensional hyperbolic space, which is the model of an unbounded manifold with negative curvature, we find stunning  differences between infinitesimal and large time asymptotics, as opposed to the Euclidean case.\\

Let us outline the results on this article.

In Section 2 we prove our first result that states that for every integrable initial datum there exists a unique solution in $C([0,\infty);L^1(M))\cap L^\infty(0,\infty;L^1(M))$.

We also prove existence and uniqueness as well as a maximum principle for bounded initial data. Then we prove a comparison result for bounded sub- and supersolutions.


In order to study the infinitesimal limit we consider, in Section 3, a family of rescaled operators
\begin{equation*}
 L_\ep u(x):=\frac 1{\ep^2}\int_M \ep^{-N}J\Big(\frac{s_{xy}}\ep\Big)\big(u(y)-u(x)\big)\, d\mu_y
\end{equation*}
with $M$ a spherically symmetric $C^2$ manifold and $J$ with compact support, and we prove that for every $u\in L^\infty(0,T;C^{2+\alpha}_{loc}(M))$ there holds that
\[L_\ep u(x)\to \a\Delta_Mu(x)
 \]
locally uniformly in $M\times[0,T]$ where $\Delta_M$ is the Laplace-Beltrami operator on $M$ and $\a$ is a constant that depends on $J$. When $u\in L^\infty(0,T,C^{2+\alpha}(M))$ the convergence is uniform.

From this result we are able to prove that the solutions $u^\ep$ of the rescaled problems
\[\left\{\begin{split}
&u^\ep_t-L_\ep u^\ep=0\ \ \qquad\mbox{in }M\times(0,T),\\
&u^\ep(x,0)=u_0(x)\qquad\mbox{in }M
\end{split}\right.\]
converge uniformly in $M\times[0,T]$ to the solution $u$ of the Heat-Beltrami  equation
\[\left\{\begin{split}
&u_t-\Delta_Mu=0 \qquad\ \mbox{in }M\times(0,T),\\
&u(x,0)=u_0(x)\qquad\mbox{in }M
\end{split}\right.\]
if $u\in L^\infty(0,T,C^{2+\alpha}(M))$. This regularity is attained if, for instance, $M\in C^3$ and $u_0$ belongs to the closure of $C^3(M)$ in the norm of $C^{2+\alpha}(M)$ (see \cite{simonett}).

We begin the study the large time asymptotics in Section 4 by  looking at the spectral properties of the operator $L$ in the case of compact  manifolds. We prove that there is a nondecreasing sequence of eigenvalues converging to one, the first one being 0.  The normalized eigenfunctions  form an orthonormal basis of $L^2(M)$. We deduce that every solution with initial datum in $L^2(M)$ converges to its average exponentially fast in $L^2(M)$.  For bounded  initial data we prove this convergence in $L^\infty(M)$. We note at this point that in contrast with the usual heat operator, there is no regularizing effect. In particular, if the initial datum is not bounded the solution will not be bounded at any time. Nevertheless, the eigenfunctions are as smooth as the kernel $J$ in the nonlocal operator.

Then, in Section 5 we turn our attention to the case of the $N-$dimensional hyperbolic space as an example of an unbounded,  radially symmetric space. In this case, the infinitesimal limit corresponds to the Heat-Beltrami equation. But, using Fourier transform methods as in \cite{Chasseigne-Chaves-Rossi} for the Euclidean case, we find that, in contrast to the results in \cite{Chasseigne-Chaves-Rossi}, the large time asymptotics for problem \eqref{P} are more related to a translated Heat-Beltrami equation. In fact, the final profile is the same as the one for the problem
\[
\begin{cases}
v_t=(a-1) v+ b\,\Big(\Delta_{\H^N} v+\frac{(N-1)^2}4\,v\Big),\quad& x\in \H^N,\ t>0,\\
v(x,0)=u_0(x),\quad& x\in \H^N.
\end{cases}
\]
Here $a>0$ is a weighted integral of the kernel $J$ related to the radial Fourier transform and $b>0$ is also related to the Fourier transform of $J$.

Finally, there are several interesting features to be discussed. First, it might be more accurate to normalize the kernel $J$ in the case of hyperbolic space in such a way that $a=1$ instead of the usual normalization \eqref{mass=1}. Then, the solutions will behave exactly as those of the translated problem
\[
\begin{cases}
v_t= b\,\Big(\Delta_{\H^N} v+\frac{(N-1)^2}4\,v\Big),\quad& x\in \H^N,\ t>0,\\
v(x,0)=u_0(x),\quad& x\in \H^N,
\end{cases}
\]
that may therefore be seen as a more natural model for diffusion on hyperbolic space than the Heat-Beltrami problem. In particular, in this case the solution decays as $t^{-3/2}$ in any space dimension $N\ge2$ as opposed to the rate $t^{-N/2}$ in Euclidean space, and the exponential decay of the solution of the Heat-Beltrami equation in hyperbolic space.

We include a discussion on these issues at the end of Section 5.

This model has already been proposed as a ``natural" infinitesimal diffusion problem in hyperbolic space in previous articles (see, for instance, \cite{APV:wave,Tataru:Strichartz-hyperbolic}).

\section{Existence, uniqueness and comparison.}
In this section we prove existence, uniqueness and comparison of solutions to the Cauchy problem \eqref{P}. Since the proofs are entirely similar to the ones in $\R^N$ we will only give the main ideas.

To begin with, let us notice that for every $x\in M$ there holds that
\[
 \int_M J(s_{xy})\,d\mu_y=1.\]
In fact, since for every $z\in M$, $s_{xy}=s_{\tau_z(x)\tau_z(y)}$. There holds that $s_{xy}=s_{O\tau_x(y)}$. Now, since $d\mu$ is invariant under the change of variables $z=\tau_x(y)$, there holds that
\[
 \int_M J(s_{xy})\,d\mu_y=\int_M J(s_{Oz})\,d\mu_z=1\quad\mbox{by hypothesis}.\]

Observe that $u$ is a solution to \eqref{P} if and only if
\begin{equation}\label{eqint}
 u(x,t)=e^{-t}u_0(x)+\int_0^t\int_M e^{-(t-r)}J(s_{xy})u(y,r)\,d\mu_y\,dr.
\end{equation}

This is a key point in the proof of existence and uniqueness. There holds,
\begin{theo} Let $u_0\in L^1(M)$. There exists a unique $u\in  C([0,\infty);L^1(M))$ solution to \eqref{P}, and there holds that $\|u(\cdot,t)\|_{L^1(M)}\le \|u_0\|_{L^1(M)}$.

If moreover, $u_0\in L^1(M)\cap L^\infty(M)$ there holds that $\|u(\cdot,t)\|_{L^\infty(M)}\le \|u_0\|_{L^\infty(M)}$.
 \end{theo}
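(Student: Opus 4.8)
The plan is to solve the equivalent integral formulation \eqref{eqint} by Banach's fixed point theorem, and then to extract the $L^1$ and $L^\infty$ estimates directly from \eqref{eqint} by means of a Gronwall argument. The two facts that make everything work are that the kernel operator $v\mapsto \int_M J(s_{xy})v(y)\,d\mu_y$ has operator norm at most one on $L^1(M)$ and on $L^\infty(M)$: on $L^\infty(M)$ this is immediate from $\int_M J(s_{xy})\,d\mu_y=1$ (the identity established at the start of this section), while on $L^1(M)$ it follows by Tonelli together with the symmetry $s_{xy}=s_{yx}$, since $\int_M J(s_{xy})\,d\mu_x=\int_M J(s_{yx})\,d\mu_x=1$ for every $y$ (the same identity with the roles of the two points interchanged).

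First I would fix $\tau>0$ and work in $X_\tau:=C([0,\tau];L^1(M))$ with norm $\|u\|_{X_\tau}=\sup_{t\in[0,\tau]}\|u(\cdot,t)\|_{L^1(M)}$, defining
\[
(\mathcal{T}u)(x,t):=e^{-t}u_0(x)+\int_0^t\int_M e^{-(t-r)}J(s_{xy})u(y,r)\,d\mu_y\,dr .
\]
Using the $L^1$ bound on the kernel operator, $\|(\mathcal{T}u-\mathcal{T}w)(\cdot,t)\|_{L^1(M)}\le\int_0^t e^{-(t-r)}\|(u-w)(\cdot,r)\|_{L^1(M)}\,dr\le(1-e^{-\tau})\|u-w\|_{X_\tau}$ for $t\le\tau$, so $\mathcal{T}$ is a contraction on $X_\tau$ with constant $1-e^{-\tau}<1$, for every finite $\tau$. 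Its unique fixed point is the unique solution of \eqref{eqint} on $[0,\tau]$, hence of \eqref{P}; since solutions on overlapping intervals coincide by uniqueness, letting $\tau\to\infty$ produces the unique $u\in C([0,\infty);L^1(M))$.

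For the $L^1$ bound, set $\phi(t):=\|u(\cdot,t)\|_{L^1(M)}$. Taking absolute values in \eqref{eqint}, integrating in $x$ and using Tonelli with $\int_M J(s_{xy})\,d\mu_x=1$ gives $\phi(t)\le e^{-t}\|u_0\|_{L^1(M)}+\int_0^t e^{-(t-r)}\phi(r)\,dr$; multiplying by $e^t$ and applying Gronwall's lemma to $g(t)=e^t\phi(t)$ yields $\phi(t)\le\|u_0\|_{L^1(M)}$. If in addition $u_0\in L^\infty(M)$, I would rerun the contraction argument in the smaller space $C([0,\tau];L^1(M))\cap C([0,\tau];L^\infty(M))$, using now that the kernel operator is also a contraction on $L^\infty(M)$; by uniqueness this produces the same $u$, which is therefore bounded for all time, and the identical Gronwall argument applied to $\psi(t):=\|u(\cdot,t)\|_{L^\infty(M)}$ gives $\psi(t)\le\|u_0\|_{L^\infty(M)}$.

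The only point requiring a little care — and the one I would single out as the ``main obstacle'' in an otherwise routine Picard argument — is the passage to the adjoint kernel, i.e. the identity $\int_M J(s_{xy})\,d\mu_x=1$ for every $y$; this is precisely where the symmetry of the geodesic distance and the isometry invariance of $d\mu$ enter, and it is what simultaneously delivers the $L^1$ contraction bound and the $L^1$ a priori estimate. (As the authors note, the argument is the verbatim analogue of the Euclidean one, so beyond this observation there is nothing new to do.)
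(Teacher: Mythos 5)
Your proposal is correct and follows essentially the same route as the paper: both solve the equivalent integral equation \eqref{eqint} by a Banach fixed point argument in $C([0,\tau];L^1(M))$ (resp.\ with the $L^\infty$ bound added), relying on the fact that the kernel operator has norm at most one on $L^1$ and $L^\infty$ thanks to $\int_M J(s_{xy})\,d\mu_y=\int_M J(s_{xy})\,d\mu_x=1$. The only cosmetic differences are that the paper contracts on the closed invariant ball $\{\|v(\cdot,t)\|\le\|u_0\|\}$ over a short time interval and iterates, so the norm bounds come for free from invariance, whereas you contract on the whole space for any finite $\tau$ (using the factor $1-e^{-\tau}<1$) and recover the same bounds afterwards by a Gronwall argument; both are valid.
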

 \begin{proof} It is enough to show that there exists $t_0$ independent of $u_0$ such that the operator
  \begin{equation*}
   \mathcal T v(x,t):=e^{-t}u_0(x)+\int_0^t\int_M e^{-(t-r)}J(s_{xy})v(y,r)\,d\mu_y\,dr
  \end{equation*}
is  a strict contraction in a closed, invariant subset of $C([0,t_0];L^1(M))$. Since $t_0$ is independent of $u_0$ we can continue with the fixed point argument starting from $t_0$. In this way, we get a solution in $C([0,\infty);L^1(M))$.

The proof is very easy and completely similar to the one in $\R^N$ (see \cite{CERW}). In fact, since the set
\[{\mathcal K}=\{v\in C([0,t_0];L^1(M))\,,\,\|v(\cdot,t)\|_{L^1(M)}\le \|u_0\|_{L^1(M)}\}
 \]
 is closed in the norm of $C([0,t_0];L^1(M))$, $  \mathcal T v(x,t)\in {\mathcal K}$ if $v\in {\mathcal K}$ and $\mathcal T$ is a strict contraction in $\mathcal K$ if $t_0$ is small depending only on $\|u_0\|_{L^1(M)}$, there holds that there exists a unique solution in ${\mathcal K}$.

 Analogously, if $u_0\in L^1(M)\cap L^\infty(M)$ and we now let
 \[\widetilde{\mathcal K}=\{v\in {\mathcal K} \,,\,\|v(\cdot,t)\|_{L^\infty(M)}\le \|u_0\|_{L^\infty(M)}\}
 \]
 we again get a closed set invariant under the operator $\mathcal T$, and we get that the unique fixed point in $\mathcal K$ belongs to $\widetilde{\mathcal K}$. Hence, $\|u(\cdot,t)\|_{L^\infty(M)}\le \|u_0\|_{L^\infty(M)}$.
\end{proof}

Now, we get a comparison result between continuous, bounded sub- and supersolutions. There holds,
\begin{prop}\label{comparison}  Assume $J\in L^1(M,(1+s_{xO}^2)\,d\mu)$. Let $u,\ v\in C(M\times[0,T])\cap L^\infty(M\times(0,T))$ be such that
\[\begin{aligned}
u_t-Lu&\le v_t-Lv\quad\mbox{in}\quad M\times(0,T],\\
u(x,0)&\le v(x,0)\ \, \quad\mbox{in}\quad M.
\end{aligned}\] Then, $u\le v$.
\end{prop}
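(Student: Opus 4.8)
The plan is to run the standard comparison argument for nonlocal diffusion equations, adapted to the manifold setting, via the function $w = v - u$. Setting $w = v - u$, the hypotheses give $w_t - Lw \ge 0$ in $M\times(0,T]$, $w(\cdot,0)\ge 0$, and $w\in C(M\times[0,T])\cap L^\infty(M\times(0,T))$. I want to conclude $w\ge 0$. Writing out $Lw$, the differential inequality reads
\[
w_t(x,t) \;\ge\; \int_M J(s_{xy})\,w(y,t)\,d\mu_y - w(x,t) \qquad\text{in } M\times(0,T].
\]
The first step is the usual change of unknown that removes the zeroth-order term with the wrong sign: set $z(x,t) = e^{t} w(x,t)$ (or, to be safe against unboundedness issues, $z = e^{\lambda t} w$ with $\lambda$ chosen later). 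Then $z$ satisfies $z_t(x,t) \ge \int_M J(s_{xy}) z(y,t)\, d\mu_y$ with $z(\cdot,0)\ge 0$, and it suffices to show $z \ge 0$.

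The second step is to control the growth so that a minimum is attained. Since $w$ is bounded, $z$ is bounded on each strip $M\times[0,t_0]$; the integral $\int_M J(s_{xy}) z(y,t)\,d\mu_y$ is finite because $J(s_{\cdot O})\in L^1(M,d\mu)$ (indeed $\int_M J(s_{xy})\,d\mu_y = 1$ as shown just before the proposition) and $z$ is bounded. Fix $t_0 \in (0,T]$ and suppose, for contradiction, that $\inf_{M\times[0,t_0]} z =: -m < 0$. Pick a point $(x_n,t_n)$ with $z(x_n,t_n) \to -m$. The key estimate: integrating the inequality for $z$ in time from $0$ to $t_n$ and using $z(x_n,0)\ge 0$ gives
\[
z(x_n,t_n) \;\ge\; \int_0^{t_n}\!\!\int_M J(s_{x_n y})\, z(y,r)\, d\mu_y\, dr \;\ge\; \int_0^{t_n} (-m)\, dr \;=\; -m\, t_n \;\ge\; -m\, t_0,
\]
using $\int_M J(s_{x_ny})\,d\mu_y = 1$ and $z(y,r)\ge -m$. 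Taking the limit in $n$, $-m \ge -m\, t_0$, i.e.\ $m(1-t_0)\le 0$, which forces $m=0$ as soon as $t_0 < 1$. Hence $z\ge 0$ on $M\times[0,t_0]$ for any $t_0<1$; since the threshold $t_0<1$ is independent of the data, iterate this on successive strips $[0,t_0],[t_0,2t_0],\dots$ (each time using the already-established nonnegativity at the left endpoint as the new initial condition) to cover all of $[0,T]$. Undoing the substitutions yields $w\ge 0$, i.e.\ $u\le v$.

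The step I expect to require the most care is the passage to the minimum: on a noncompact manifold the infimum of $z$ over the strip need not be attained, which is exactly why I integrate the inequality along the time variable rather than differentiating at a spatial minimum — this turns the pointwise argument into an integral one that only needs the finiteness $\int_M J(s_{xy})\,d\mu_y=1$ and the boundedness of $z$, both guaranteed by the hypothesis $J\in L^1(M,(1+s_{xO}^2)\,d\mu)$ and the assumption $w\in L^\infty$. One should also check that the differential inequality in integrated (Duhamel) form is legitimate for $C^1$-in-time, $L^\infty$ solutions — this is immediate from continuity in $t$ and dominated convergence, exactly as in the $\R^N$ case treated in \cite{CERW}, so I would simply reference that. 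The $(1+s_{xO}^2)$ weight in the hypothesis is not needed for this bounded comparison but is presumably retained for uniformity with later sections.
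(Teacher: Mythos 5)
Your proof is correct, and it takes a genuinely different route from the paper's. The paper argues by a pointwise minimum principle and splits into two cases: on compact $M$ it perturbs $w=v-u$ by $\delta t$ and reaches a contradiction at an attained minimum; on noncompact $M$ it must first build the barrier $V(x,t)=4\alpha N t+\bigl(e^t-\tfrac12\bigr)s^2_{xO}$ with $\alpha=\tfrac1{2N}\int_M J(s_{xO})s^2_{xO}\,d\mu_x$, which is exactly where the hypothesis $J\in L^1(M,(1+s_{xO}^2)\,d\mu)$ enters, adds $\delta V$ to force any negative minimum into a bounded region (implicitly using that closed geodesic balls are compact so the minimum is attained), and then iterates on time intervals of length $\log\tfrac32$. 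You instead substitute $z=e^tw$, take an approximating sequence for the (possibly unattained) infimum, and integrate the inequality $z_t\ge\int_M J(s_{xy})z(y,t)\,d\mu_y$ in time, which needs only $J\ge0$, $\int_M J(s_{xy})\,d\mu_y=1$ and the boundedness of $w$; this treats compact and noncompact $M$ uniformly, avoids any barrier or properness assumption, and, as you correctly note, shows the second-moment weight in the hypothesis is not actually needed for comparison of bounded sub/supersolutions (the paper's proof does use it, via $\alpha<\infty$). Your iteration on strips of length less than $1$ is sound since the threshold is data-independent. The only point to state carefully is the time-integration step: since the sub/supersolution inequality is pointwise (as in the paper, $u_t,v_t$ are taken to exist pointwise), one can bypass Duhamel entirely by noting that $F(r)=z(x_n,r)+mr$ is continuous on $[0,t_n]$ with $F'\ge0$ on $(0,t_n)$, hence nondecreasing by the mean value theorem, which gives $z(x_n,t_n)\ge z(x_n,0)-mt_n$ directly; with that observation your argument is complete and somewhat more elementary and more general than the paper's.
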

\begin{proof} We divide the proof into two cases.

First, assume $M$ is a compact manifold.

 Let $w=v-u+\delta t$ and assume  that $w$ is negative somewhere in $M\times[0,T]$. Then, there exists $(x_0,t_0)\in M\times(0,T]$ such that $w(y,t)\ge w(x_0,t_0)$ for every $(y,t)\in M\times(0,T]$. Thus,
\[
 0<  w_t(x_0,t_0)-\int_M J(s_{xy})\big(w(y,t_0)-w(x_0,t_0)\big)\,d\mu_y\le 0
\]
which is a contradiction. Therefore, $w\ge0$. Letting $\delta\to0$, we get that $u\le v$.

\smallskip

Now, assume that $M$ is unbounded. Let
\[V(x,t)=4\a Nt+\Big(e^t-\frac12\Big)s^2_{xO}
\]
where $\a=\frac1{2N}\int_MJ(s_{xO})s^2_{xO}\,d\mu_x$.

\smallskip

Then, since $ s^2_{yO}\le 2 s^2_{xO}+2s^2_{xy}$ (this is a consequence of the triangular inequality),
\[
s^2_{yO}-s^2_{xO}\le s^2_{xO}+2s^2_{xy}.
\]
Thus,
\[
\int_MJ(s_{xy})(s^2_{yO}-s^2_{xO})\,d\mu_y\le
s^2_{x0}+2\int_MJ(s_{xy})s^2_{xy}\,d\mu_y=s^2_{xO}+4\a N
\]
because $\int_MJ(s_{xy})s^2_{xy}\,d\mu_y=\int_MJ(s_{zO})s^2_{zO}\,d\mu_z=2 \a N$.

Let now $\ep, \delta>0$ and for $0\le t\le \log\frac32-\ep$, let
\[W(x,t)=w(x,t)+\delta V(x,t).\]
Then,
\[ W_t-LW\ge \delta \Big[4\a N\Big(\frac32-e^t\Big)+\frac12 s^2_{xO}\Big]>0.
\]

Moreover, there exists $R_\delta$ such that, if $s_{x0}\ge R_\delta$, $t\ge0$ there holds that $W(x,t)>0$.
Hence, if $W$ is negative somewhere in $M\times[0,\log\frac32-\ep]$, it attains a minimum at some point $(x_0,t_0)$ with $t_0>0$ and, as in the previous case, we get a contradiction.

Therefore, $W\ge0$ for $0\le t\le  \log\frac32-\ep$.

Letting first $\delta\to0$ and then, $\ep\to0$ we conclude that $u\le v$ for $0\le t\le  \log\frac32$.

We can proceed in a similar way for $\log\frac32\le t\le  2\log\frac32$, etc and we get the inequality $u\le v$ for $0\le t\le T$.
\end{proof}

\begin{rema}\label{remark-comparison} Observe  that the comparison principle holds as long as the operator $L$ can be written as
\[
Lv(x)=c_0\int_M J(s_{xy})\big(u(y)-u(x)\big)\,d\mu_y
\]
independently of the value of $c_0>0$.
\end{rema}

\medskip

We also have existence and uniqueness of  bounded solutions as well as a maximum principle, by assuming only that $u_0\in L^\infty(M)$.

\begin{theo} Let $u_0\in L^\infty(M)$. There exists a unique  solution to \eqref{P} in $C([0,\infty),L^\infty(M))$. Moreover, this solution $u$ satisfies that $\|u\|_{L^\infty(M\times(0,\infty))}\le \|u_0\|_{L^\infty(M)}$.
 \end{theo}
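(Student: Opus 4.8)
The plan is to repeat, almost verbatim, the fixed-point scheme used for the first theorem of this section, but now in the space $C([0,T];L^\infty(M))$ in place of $C([0,t_0];L^1(M))$. As recorded in \eqref{eqint}, a function $u$ is a solution of \eqref{P} precisely when it is a fixed point of
\[
\mathcal Tv(x,t):=e^{-t}u_0(x)+\int_0^t\int_M e^{-(t-r)}J(s_{xy})v(y,r)\,d\mu_y\,dr,
\]
and the identity $\int_M J(s_{xy})\,d\mu_y=1$ established at the start of the section does all the work. First I would check that $\mathcal T$ maps $C([0,T];L^\infty(M))$ into itself: since $J(s_{xy})\in L^1(d\mu_y)$ with total mass $1$ for every $x$, the spatial integral is well defined, bounded in $x$ by $\|v(\cdot,r)\|_{L^\infty(M)}$, and the resulting function is continuous in $t$ with values in $L^\infty(M)$.

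Next I would estimate, for $v,w\in C([0,T];L^\infty(M))$,
\[
\|\mathcal Tv(\cdot,t)-\mathcal Tw(\cdot,t)\|_{L^\infty(M)}\le\int_0^t e^{-(t-r)}\|v(\cdot,r)-w(\cdot,r)\|_{L^\infty(M)}\,dr\le(1-e^{-T})\,\|v-w\|_{C([0,T];L^\infty(M))},
\]
so that $\mathcal T$ is a strict contraction, with ratio $1-e^{-T}<1$, for \emph{every} finite $T>0$; this at once yields a unique fixed point $u$ in $C([0,T];L^\infty(M))$, and since the fixed points on nested intervals agree by uniqueness, one obtains a unique solution $u\in C([0,\infty);L^\infty(M))$. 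For the a priori bound I would run the same contraction inside the closed (hence complete) ball $\mathcal K=\{v\in C([0,T];L^\infty(M)):\|v(\cdot,t)\|_{L^\infty(M)}\le\|u_0\|_{L^\infty(M)}\text{ for all }t\}$: the computation $\|\mathcal Tv(\cdot,t)\|_{L^\infty(M)}\le e^{-t}\|u_0\|_{L^\infty(M)}+(1-e^{-t})\|u_0\|_{L^\infty(M)}=\|u_0\|_{L^\infty(M)}$ shows $\mathcal T(\mathcal K)\subset\mathcal K$, so the unique fixed point lies in $\mathcal K$ and hence $\|u\|_{L^\infty(M\times(0,\infty))}\le\|u_0\|_{L^\infty(M)}$.

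I do not expect a genuine obstacle here; the statement is the $L^\infty$ analogue of the first theorem and of its proof in $\R^N$ (cf.\ \cite{CERW}). The only point that needs a little care --- and where I would actually spend time --- is the regularity claim that $\mathcal Tv\in C([0,T];L^\infty(M))$, i.e.\ the measurability in $x$ of $x\mapsto\int_M J(s_{xy})v(y,r)\,d\mu_y$ together with the continuity in $t$ of the time integral as an $L^\infty(M)$-valued map; both follow from $J\in L^1$, Fubini and dominated convergence. Note also that one could alternatively deduce the bound $\|u(\cdot,t)\|_{L^\infty(M)}\le\|u_0\|_{L^\infty(M)}$ from the comparison principle of Proposition \ref{comparison} applied to the constant super- and subsolutions $\pm\|u_0\|_{L^\infty(M)}$, but that route needs the extra hypothesis $J\in L^1(M,(1+s_{xO}^2)\,d\mu)$, so the direct fixed-point argument is preferable.
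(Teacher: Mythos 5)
Your proposal is correct and follows essentially the same route as the paper: Banach fixed point for the map $\mathcal T$ from the integral formulation \eqref{eqint}, acting on $C([0,T];L^\infty(M))$, with the closed ball $\{\|v(\cdot,t)\|_{L^\infty(M)}\le\|u_0\|_{L^\infty(M)}\}$ shown to be invariant so that the fixed point inherits the a priori bound. The only (cosmetic) difference is that you observe the contraction ratio $1-e^{-T}<1$ holds on any finite interval, while the paper contracts on a short time interval $t_0$ independent of $u_0$ and iterates; both yield the same global solution.
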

\begin{proof}

It is easy to see that
$$\widehat{\mathcal K}=\{ v\in C([0,t_0],L^\infty(M))\,,\,\|u\|_{L^\infty(M\times(0,\infty))}\le \|u_0\|_{L^\infty(M)}\}$$
is closed under the map $\mathcal T$, and this map is a strict contraction in $\widehat{\mathcal K}$ if $t_0$ is small (independent of $u_0$). Therefore, $\mathcal T$  has a unique fixed point. Iterating this argument, we get a unique global solution.
\end{proof}

\begin{rema} Observe that when $u_0\in C(M)\cap L^\infty(M)$ and $u\in C([0,\infty),L^\infty(M))$ is the unique solution with initial datum $u_0$, there holds that $u\in C(M\times[0,T])$. This can be seen from \eqref{eqint}, for instance.

The same result is true if $u_0\in C(M)\cap L^1(M)$ and $J\in L^1\cap L^\infty$.
\end{rema}

\section{The Laplace Beltrami operator as the infinitesimal limit}

In this section we assume that $J$ has compact support contained in $[0,1]$. Let $M$ be a spherically symmetric $C^2$ manifold which has the property that at any point in $M$, denoted by $O$ there exists a small ball $B_R$ centered at $O$ and polar coordinates $(r,\theta)$ such that the Riemannian metric in $B_R\setminus O$ is given by
$$
ds^2= dr^2+ \psi^2(r)d\theta^2,
$$
where $d\theta^2$ is the standard metric on $\mathbb S^{N-1}$, $\psi\in C^2$,  $\psi(0)=0$ and $\psi'(0)\neq 0$, see \cite{Grigoryan:book}. Without loss of generality we may assume that $\psi'(0)=1$. Here we have made use of the assumption that there exists for every point an isometry $\tau_x$ such that $\tau_x(x)=O$.
\smallskip

Notice
that if $\psi=r$ then $M=\mathbb R^N$, if $\psi = \sin r$ then $M=\mathbb S^N$ and if $\psi=\sinh r$ then $M=\mathbb H^N$.
\smallskip

We consider the measure in $M$ such that, in these coordinates, the volume element assumes the form
$$
d\mu = \psi^{N-1}(r)dr d\theta,
$$
with $d\theta$ the usual area element of the $(N-1)$-dimensional sphere,
and the Laplace Beltrami operator on $M$ is
$$
\Delta_M=  \partial_{rr} + (N-1) \frac{\psi'}{\psi}\partial_r + \frac{1}{\psi^2}\Delta_{\S^{N-1}}
$$
where $\Delta_{\S^{N-1}}$ is the Laplace Beltrami operator on $\mathbb S^{N-1}$. Choose $R$ so small that $B_R \subset M\setminus\{\rm{cut \:locus \:of}\: O\}$. It is easy to see that in this case the unique geodesic from any point $y\in B_R$ to the center $x$ of the ball $B_R$ is a straight line and that the geodesic ball $B_r$ centered at $x$ is
$\{(s,\theta):0<s<r,\:\theta \in \mathbb S^{N-1}\}$.

In this section we consider a family of nonlocal diffusion operators defined on $M$ which are obtained from $L$ by rescaling. Namely,
\begin{equation*}
L_\ep u(x)=\frac1{\ep^2}\int_M\ep^{-N}J\Big(\frac{s_{xy}}\ep\Big)\big(u(y)-u(x)\big)\,d\mu_y.
\end{equation*}
We prove that, when $u\in C^{2+\alpha}_{loc}(M)$, there holds that
\begin{equation}\label{conv-eps}
L_\ep u(x)\to \a\Delta_M u(x)\qquad\mbox{as }\quad\ep\to0
\end{equation}
locally uniformly in $M$ with $\Delta_M$ the Laplace Beltrami operator on $M$. If  $u\in C^{2+\alpha}(M)$, the convergence is uniform and, moreover
\begin{equation}\label{uniform-alpha}\big|L_\ep u(x)- \a\Delta_M u(x)\big|\le C\|u\|_{C^{2+\alpha}(M)}\,\ep^\alpha\quad\forall x\in M.
\end{equation}

\begin{theo} \label{thm-radial} Let $M$ and $L_\ep$ be as above. Let $\a=\frac1{2N}\int_{\R^N} J(|z|)\,|z|^2\,dz$. Then, \eqref{conv-eps} holds locally uniformly in $M$. If $u\in C^{2+\alpha}(M)$ globally, the convergence is uniform in $M$ and \eqref{uniform-alpha} holds.
\end{theo}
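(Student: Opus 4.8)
The plan is to use the homogeneity of $M$ to reduce to a fixed point, pass to geodesic polar coordinates, Taylor–expand both $u$ and the volume density, and exploit the spherical symmetry to annihilate the first–order contribution. Since each $\tau_x$ is an isometry fixing both the distance $s_{\cdot\cdot}$ and the measure $\mu$, the operators $L_\ep$ and $\Delta_M$ commute with the $\tau_x$; hence it suffices to estimate $L_\ep u(x)-\a\Delta_M u(x)$ at $x=O$, with all constants expressed through the $C^{2+\alpha}$ size of $u$ on a geodesic ball $B_R$ whose geometry — by homogeneity — is the same around every point (in particular $R$ may be chosen independent of the base point). Because $\operatorname{supp}J\subset[0,1]$, for $\ep<R$ the integral defining $L_\ep u(O)$ is supported in $B_\ep\subset B_R$, where the geodesic from $O$ to $y$ is radial, $s_{Oy}$ equals the radial coordinate of $y$, and $d\mu_y=\psi^{N-1}(s)\,ds\,d\theta$. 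Writing $y=(s,\theta)$ and substituting $s=\ep\sigma$,
\[
L_\ep u(O)=\frac1{\ep^{2}}\int_0^1\!\!\int_{\S^{N-1}}J(\sigma)\,\sigma^{N-1}\,h(\ep\sigma)\,\big(u(\ep\sigma,\theta)-u(O)\big)\,d\theta\,d\sigma,\qquad h(t):=\Big(\frac{\psi(t)}{t}\Big)^{N-1}.
\]
Since $\psi\in C^2$ with $\psi(0)=0$ and $\psi'(0)=1$, Taylor's theorem gives $\psi(t)=t+O(t^2)$, so $h$ extends continuously to $0$ with $h(0)=1$ and $|h(t)-1|\le Ct$ on $[0,1]$.

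Next I would expand $u$ along radial geodesics: as $u\in C^{2+\alpha}$ (uniformly in $\theta$, with constants controlled by $\|u\|_{C^{2+\alpha}}$),
\[
u(\ep\sigma,\theta)-u(O)=\ep\sigma\,a(\theta)+\tfrac12\,\ep^2\sigma^2\,c(\theta)+\rho_\ep(\sigma,\theta),\qquad |\rho_\ep(\sigma,\theta)|\le C\,\|u\|_{C^{2+\alpha}}\,\ep^{2+\alpha}\sigma^{2+\alpha},
\]
with $a(\theta)=\partial_r u(O;\theta)$ and $c(\theta)=\partial_{rr}u(O;\theta)$. Substituting this, the first– and second–order contributions have the variables separated and produce three terms. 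The first–order term equals $\ep^{-1}\big(\int_0^1 J\sigma^{N}h(\ep\sigma)\,d\sigma\big)\big(\int_{\S^{N-1}}a\,d\theta\big)$ and vanishes identically, because $a(\theta)=\langle\nabla_M u(O),\theta\rangle$ is the restriction to $\S^{N-1}$ of a linear form, hence has zero mean. The remainder term is bounded by $C\|u\|_{C^{2+\alpha}}\,|\S^{N-1}|\,\big(\int_0^1 J\sigma^{N+1+\alpha}\,d\sigma\big)\,\ep^{\alpha}$, i.e. it is $O(\ep^\alpha)$. The second–order term is $\tfrac12\big(\int_0^1 J\sigma^{N+1}h(\ep\sigma)\,d\sigma\big)\big(\int_{\S^{N-1}}c\,d\theta\big)$, and since $|h(\ep\sigma)-1|\le C\ep$ it converges, with error $O(\ep)$, to $\tfrac12\big(\int_0^1 J\sigma^{N+1}\,d\sigma\big)\big(\int_{\S^{N-1}}c\,d\theta\big)$.

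It remains to identify this last quantity with $\a\Delta_M u(O)$. Rewriting $\a=\frac1{2N}\int_{\rn}J(|z|)|z|^2\,dz$ in polar coordinates in $\rn$ gives $\a=\tfrac{|\S^{N-1}|}{2N}\int_0^1 J\sigma^{N+1}\,d\sigma$, so it is enough to check $\int_{\S^{N-1}}c\,d\theta=\tfrac{|\S^{N-1}|}{N}\,\Delta_M u(O)$. For this I would substitute $u(r,\theta)=u(O)+r\,a(\theta)+\tfrac12 r^2 c(\theta)+o(r^2)$ into the polar form $\Delta_M=\partial_{rr}+(N-1)\tfrac{\psi'}{\psi}\partial_r+\tfrac1{\psi^2}\Delta_{\S^{N-1}}$ and let $r\to0$: the $r^{-1}$–singular contributions cancel because $a$, being a first spherical harmonic, satisfies $\Delta_{\S^{N-1}}a=-(N-1)a$, which leaves $\Delta_M u(O)=N\,c(\theta)+\tfrac12\Delta_{\S^{N-1}}c(\theta)$; averaging over $\S^{N-1}$ annihilates the last term and yields the asserted identity (equivalently, in geodesic normal coordinates $c(\theta)=\theta^\top D^2 u(O)\,\theta$ and $\Delta_M u(O)=\operatorname{tr}D^2u(O)$). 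Putting the three estimates together gives $|L_\ep u(O)-\a\Delta_M u(O)|\le C\|u\|_{C^{2+\alpha}}\ep^\alpha$ (using $\ep\le\ep^\alpha$ for $\ep\le1$), which is \eqref{uniform-alpha}; the locally uniform statement \eqref{conv-eps} follows by the identical argument using only the $C^{2+\alpha}$ norm of $u$ on a neighborhood of the point.

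The step I expect to be the main obstacle is exactly this last identification — relating the spherical average of the second radial derivative of $u$ at $O$ to $\Delta_M u(O)$ — which rests on the cancellation of the singular $r^{-1}$–terms in the polar form of the Laplace–Beltrami operator, the second place where the spherical symmetry of $M$ is genuinely used. The remaining ingredients (separation of the angular and radial integrals, vanishing of the first moment of a radial kernel against a linear function, and the bookkeeping of the $C^{2+\alpha}$ Taylor remainder) are elementary, and the uniformity in the base point needed for the global statement is immediate from the fact that $M$ looks, isometrically, the same around every point.
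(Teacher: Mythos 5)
Your proof is correct, and although it shares the paper's skeleton (geodesic polar coordinates, second-order Taylor expansion, spherical symmetry, and the same bookkeeping of the $C^{2+\alpha}$ remainder giving the $O(\ep^\alpha)$ rate), the way you identify the limit is genuinely different. The paper expands $u(\rho,\theta)-u(x)$ with coefficients evaluated at $(\rho,\theta)$, substitutes $u_{\rho\rho}$ from the polar form of $\Delta_M$, and then removes the first-order radial term via the divergence theorem on $M$ (turning $\int_{\S^{N-1}}u_\rho\,\psi^{N-1}\,d\theta$ into $\int_{B_\rho}\Delta_M u\,d\mu$), before using $\int_{\S^{N-1}}\Delta_{\S^{N-1}}u\,d\theta=0$ and the expansions of $\psi$ and $|B_\rho|$ to extract the constant. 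You instead expand at the center: the first-order term vanishes exactly because $a(\theta)=\langle\nabla u(O),\theta\rangle$ has zero spherical mean, and the second-order term is identified through the spherical average of the Hessian quadratic form, $\int_{\S^{N-1}}\theta^\top D^2u(O)\,\theta\,d\theta=\tfrac{\omega_{N-1}}{N}\operatorname{tr}D^2u(O)=\tfrac{\omega_{N-1}}{N}\Delta_M u(O)$ in normal coordinates, which reproduces the same constant. Your route avoids the divergence theorem and keeps the radial and angular integrals separated from the start; the paper's route avoids normal coordinates and works with intrinsic quantities at $(\rho,\theta)$. Your preliminary reduction to $x=O$ via the isometries $\tau_x$ is not in the paper (which argues at a general point), but it is legitimate and is precisely what makes the constants base-point independent for the global statement. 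One caveat: your derivation of $\Delta_M u(O)=N\,c(\theta)+\tfrac12\Delta_{\S^{N-1}}c(\theta)$ by applying the polar Laplacian to the Taylor expansion and letting $r\to0$ is formal as written (you differentiate a remainder controlled only pointwise); the identity is nevertheless true (split $\theta\mapsto\theta^\top D^2u(O)\,\theta$ into its mean plus a degree-two spherical harmonic, whose $\Delta_{\S^{N-1}}$-eigenvalue is $-2N$), and your parenthetical normal-coordinates argument already yields the averaged identity you actually use, so the proof stands as given.
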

\begin{proof}
Let us now introduce polar coordinates in a small ball $B_R$ centered at $x$. In this case $s_{xy}=\rho$ and thus, taking into account that the support of $J$ is contained in $[0,1]$, we obtain for $\ep$ small enough
\begin{align*}
L_\ep u(x) =\ep^{-(N+2)}\int_0^{\ep }J\left(\frac{\rho}{\ep}\right)  \int_{\mathbb S^{N-1}}(u(\rho,\theta)-u(x))\psi^{N-1}(\rho)\,d\theta\,d\rho.
\end{align*}

Since $u\in C^{2+\alpha}_{loc}(M)$ we have,
$$\begin{aligned}
u(\rho, \theta)-u(x)&= u_\rho(\rho,\theta)\rho -\frac{1}{2}u_{\rho\rho}(\rho,\theta)\rho^2 + O(\rho^{2+\alpha})\\
&=u_\rho(\rho,\theta)\rho-\frac12\Delta_M u(\rho,\theta)\rho^2+\frac{N-1}2\frac{\psi'(\rho)\rho}{\psi(\rho)}\rho u_\rho(\rho,\theta)\\
&+\frac12\frac{\rho^2}{\psi^2(\rho)}\Delta_{\S^{N-1}} u(\rho,\theta)+ O(\rho^{2+\alpha}).
\end{aligned}
$$
Observe that $u_\rho(\rho,\theta)$ is the derivative in the direction of  the outer normal to $B_\rho$. Therefore, by applying the divergence theorem on manifolds we get,
$$
\begin{aligned}
&\int_{\mathbb S^{N-1}}(u(\rho ,\theta)-u(x))\psi^{N-1}(\rho)d\theta=\rho\Big(1+\frac{N-1}2\frac{\psi'(\rho)\rho}{\psi(\rho)}\Big)\int_{B_\rho(x)} \Delta_M u(y)\,d\mu_y\\
&-\frac12\int_{\mathbb S^{N-1}}\Delta_Mu(\rho,\theta)\psi^{N-1}(\rho)\rho^2\,d\theta
-\frac{\rho^2}{\psi^2(\rho)}\int_{\mathbb S^{N-1}}\Delta_{\S^{N-1}} u(\rho,\theta) \psi^{N-1}(\rho)\,d\theta+O(\rho^{2+\alpha})\psi^{N-1}(\rho).
\end{aligned}
$$

Now, taking into account that $\int_{\S^{N-1}}\Delta_{\S^{N-1}} u(\rho,\theta)\,d\theta=0$,
$\frac{\psi'(\rho)\rho}{\psi(\rho)}=1+O(\rho)$, $\psi^{N-1}(\rho)=\rho^{N-1}+O(\rho^N)$,
and
$|B_\rho|=\omega_{N-1}(\frac{\rho^N}N+O(\rho^{N+1}))$ with $\omega_{N-1}$ the usual measure of the sphere $\S^{N-1}$ we get,
$$
\begin{aligned}
&\int_{\mathbb S^{N-1}}(u(\rho ,\theta)-u(x))\psi^{N-1}(\rho)d\theta\\
&=\rho\frac{N+1}2\int_{B_\rho(x)}\Delta_M u(y)\,d\mu_y-\frac{\rho^{N+1}}2\int_{\S^{N-1}}\Delta_M u(\rho,\theta)\,d\theta+O(\rho^{N+1+\alpha})\\
&=\rho^{N+1}\Big(\frac{N+1}{2N}-\frac12\Big)\omega_{N-1}\Delta_M u(x)+O(\rho^{N+1+\alpha})
\end{aligned}
$$
Therefore,
\[
 L_\ep u(x)=\frac{\omega_{N-1}}{2N}
\Big[\int_0^\ep J\Big(\frac\rho\ep\Big)\Big(\frac\rho\ep\Big)^2\Big(\frac\rho\ep\Big)^{N-1}\frac{d\rho}\ep\Big]\Delta_M u(x)+O(\ep^\alpha)= \a\Delta_Mu(x)+O(\ep^\alpha).
\]
And the theorem is proved.

Observe that the error term depends only on  $\|\psi\|_{C^2(B)}$ and $\|u\|_{C^{2+\alpha}(B)}$ where $B$ is a small ball around $x$. And, if $u\in C^{2+\alpha}(M)$, \eqref{uniform-alpha} holds with a constant depending only on $\psi$.
\end{proof}

\bigskip

Now we have the elements to prove the convergence of the solution of the $\ep-$problem to the solution of the Heat-Beltrami  equation. There holds,
\begin{theo} Let $u\in C^{2+\alpha,1+\alpha/2}(M\times[0,T])$ be the solution to
 \[
  \begin{cases}
 u_t-\a\Delta_{M} u=0\qquad&\mbox{in }M\times(0,T],\\
u(x,0)=u_0(x)\qquad&\mbox{in }M.
\end{cases}
\]
Let $u^\ep$ be the solution to
\[
  \begin{cases}
 u_t-L_\ep u=0\qquad&\mbox{in }M\times(0,T],\\
u(x,0)=u_0(x)\qquad&\mbox{in }M.
\end{cases}
\]
Then, $u^\ep\to u$ uniformly in $M\times[0,T]$.
\end{theo}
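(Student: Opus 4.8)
The plan is to regard the solution $u$ of the Heat--Beltrami equation as an approximate solution of the $\ep$--problem and to control the discrepancy by the comparison principle of Proposition~\ref{comparison}. First I would set $w^\ep:=u^\ep-u$. By linearity of $L_\ep$ and the two equations,
\[
w^\ep_t-L_\ep w^\ep=\big(u^\ep_t-L_\ep u^\ep\big)-\big(u_t-L_\ep u\big)=-\big(u_t-L_\ep u\big)=L_\ep u-\a\Delta_M u=:g_\ep\qquad\mbox{in }M\times(0,T],
\]
together with $w^\ep(\cdot,0)=0$. Since for the Heat--Beltrami solution to lie in $C^{2+\alpha,1+\alpha/2}(M\times[0,T])$ we need $u_0\in C^{2+\alpha}(M)\subset C(M)\cap L^\infty(M)$, the existence and regularity results of Section~2, applied to the operator $L_\ep$ (which is of exactly the same type as $L$), give $u^\ep\in C(M\times[0,T])\cap L^\infty$; hence $w^\ep$ and $g_\ep$ are continuous and bounded on $M\times[0,T]$, which is what is required to invoke the comparison principle.

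The second step is to estimate $g_\ep$. For each fixed $t\in[0,T]$ the map $x\mapsto u(x,t)$ belongs to $C^{2+\alpha}(M)$, and $\Lambda_T:=\sup_{t\in[0,T]}\|u(\cdot,t)\|_{C^{2+\alpha}(M)}<\infty$ because $u\in C^{2+\alpha,1+\alpha/2}(M\times[0,T])$. Applying Theorem~\ref{thm-radial} pointwise in $t$, and using the quantitative bound \eqref{uniform-alpha} whose constant depends only on $M$ (through $\|\psi\|_{C^2}$), I get
\[
\|g_\ep\|_{L^\infty(M\times[0,T])}\le C(M)\,\Lambda_T\,\ep^\alpha=:\eta_\ep\longrightarrow 0\qquad\mbox{as }\ep\to0.
\]

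Finally I would run the comparison argument with the $x$--independent barrier $\overline w(x,t):=\eta_\ep\,t$. Because $\overline w$ does not depend on the spatial variable, $L_\ep\overline w\equiv0$, so $\overline w_t-L_\ep\overline w=\eta_\ep\ge g_\ep$ and $\overline w(\cdot,0)=0=w^\ep(\cdot,0)$; similarly $-\overline w$ is a subsolution for the same equation since $g_\ep\ge-\eta_\ep$. Comparison then yields $|u^\ep-u|\le\eta_\ep\,t\le T\eta_\ep$ on $M\times[0,T]$, that is, $\|u^\ep-u\|_{L^\infty(M\times[0,T])}\le C(M)\,\Lambda_T\,T\,\ep^\alpha\to0$, which is the asserted uniform convergence (indeed with rate $\ep^\alpha$). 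The one point that needs a little care --- and the step I expect to be the main obstacle --- is checking that Proposition~\ref{comparison} really applies with $L$ replaced by $L_\ep$: in the compact case nothing changes, as that argument uses only the nonnegativity of the kernel and its unit mass, while in the noncompact case one must rerun the barrier estimate with the rescaled kernel $\ep^{-N}J(s_{xy}/\ep)$, whose second moment $\ep^{-2}\int_M \ep^{-N}J(s_{xy}/\ep)\,s_{xy}^2\,d\mu_y$ is --- by the very expansion performed in the proof of Theorem~\ref{thm-radial} --- bounded uniformly in $\ep$ (it converges to $2\a N$); see also Remark~\ref{remark-comparison}. With this observation the function $V$ in Proposition~\ref{comparison} provides a valid barrier with $\ep$--uniform constants, and the argument goes through.
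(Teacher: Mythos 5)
Your argument is correct and essentially identical to the paper's proof: the paper also considers the difference $w^\ep=u-u^\ep$, notes it solves the $\ep$--problem with right-hand side $F_\ep=\a\Delta_M u-L_\ep u$ (controlled uniformly by Theorem \ref{thm-radial} and \eqref{uniform-alpha}), and compares it with the barrier $z^\ep(x,t)=t\,\|F_\ep\|_{L^\infty}$ using Proposition \ref{comparison} together with Remark \ref{remark-comparison} to justify applying the comparison principle to $L_\ep$. Your extra remarks on the rate $\ep^\alpha$ and on the $\ep$--uniform second moment of the rescaled kernel are consistent with, but not needed beyond, what the paper does.
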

\begin{proof}Let $w^\ep=u-u^\ep$. Then, $\wep$ is a continuous solution to
\[
  \begin{cases}
 w^\ep_t-L_\ep w^\ep=\a\Delta_{M} u-L_\ep u:= F_\ep\qquad&\mbox{in }M\times(0,T),\\
w^\ep(x,0)=0\qquad&\mbox{in }M.
\end{cases}
\]
Recall that we have $F_\ep\to0$ uniformly in $M\times(0,T)$.

Let $z^\ep(x,t)=t\,\|F_\ep\|_{L^\infty(M\times(0,T))}$. Then, $z^\ep$ is the unique bounded solution to
\[
  \begin{cases}
 z^\ep_t-L_\ep z^\ep=\|F_\ep\|_{L^\infty(M\times(0,T))}\qquad&\mbox{in }M\times(0,T),\\
z^\ep(x,0)=0\qquad&\mbox{in }M.
\end{cases}
\]
So, by the comparison principle (Proposition \ref{comparison} and Remark \ref{remark-comparison}) we have
\[
 -z^\ep\le w^\ep\le z^\ep.
\]
Since $ z^\ep\to0$ uniformly in $M\times(0,T)$, the theorem is proved.
\end{proof}

\bigskip

\begin{rema} Let $M$ be a spherically symmetric  $C^3$ manifold and $u_0$ in the clausure of $C^3(M)$ in the norm of $C^{2+\alpha}(M)$. Let $u$ be the unique bounded solution of the Heat-Beltrami equation in $M\times(0,T)$ such that $u\in C([0,T];L^\infty(M))$. Then, $u\in C([0,T]; C^{2+\alpha}(M))$. This is a consequence of the uniqueness of bounded solution and the
existence of solution in $C([0,T];C^{2+\alpha}(M))$ (see \cite{simonett}, Theorem 3.7).
\end{rema}

\section{Spectral properties in compact manifolds.}

In this section we study the spectrum of $L$ when $M$ is  compact and connected. We will assume throughout the section that $J(s)$ is positive for $0\le s<s_0$ for a certain $s_0>0$ and $J$ is Lipschitz in $M$. From the spectral properties we deduce the large time asymptotics of the solutions. We begin by assuming that $u_0\in L^2(M)$. Then, we consider the case of bounded initial data in order to get unifom decay estimates. We recall that $u$ is a solution to the integral equation \eqref{eqint} from which it is clear that if $u_0\in L^1(M)$ is not bounded, $u(\cdot, t)$ will not be bounded for any $t>0$.

\begin{theo} Let us consider $L$ as an operator in $L^2(M)$. There exists a sequence $0=\lambda_0< \lambda_1\le \lambda_2\le\cdots\nearrow 1$ of eigenvalues of $L$ and corresponding eigenfunctions $\{\varphi_k\}$ that form an orthonormal basis in $L^2(M)$.
\end{theo}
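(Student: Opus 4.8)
The plan is to realize $L = K - \mathrm{Id}$ on $L^2(M)$, where $K$ is the integral operator $Ku(x) = \int_M J(s_{xy})u(y)\,d\mu_y$, and to analyze the spectrum of $K$ by appealing to the spectral theorem for compact self-adjoint operators. First I would observe that $K$ is self-adjoint, because $J(s_{xy})$ is a symmetric kernel ($s_{xy} = s_{yx}$), and that $K$ maps $L^2(M)$ into $L^2(M)$ with operator norm $\le 1$: indeed, by the computation already carried out in Section~2, $\int_M J(s_{xy})\,d\mu_y = 1$ for every $x$ (and, by the same isometry-and-invariance argument applied to the other variable, $\int_M J(s_{xy})\,d\mu_x=1$ for every $y$), so Schur's test gives $\|K\|_{L^2\to L^2}\le 1$. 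Next I would show $K$ is \emph{compact}: on the compact manifold $M$ the kernel $(x,y)\mapsto J(s_{xy})$ is bounded (it is Lipschitz, hence continuous, on a compact set), so it lies in $L^2(M\times M)$ and $K$ is a Hilbert--Schmidt operator, therefore compact. Since $J(s)>0$ for $0\le s<s_0$, the kernel is nonnegative and strictly positive near the diagonal.

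With these facts in hand, the spectral theorem for compact self-adjoint operators furnishes a sequence of real eigenvalues of $K$ accumulating only at $0$, with an orthonormal basis $\{\varphi_k\}$ of $L^2(M)$ consisting of eigenfunctions. Translating by $-\mathrm{Id}$, $L$ has the same eigenfunctions $\{\varphi_k\}$, which therefore form an orthonormal basis of $L^2(M)$, and its eigenvalues are $\lambda_k = \mu_k - 1$ where $\mu_k$ are the eigenvalues of $K$. It remains to locate these eigenvalues in $(-\infty,0]$ with $0$ simple and the rest accumulating at $1$ from below, i.e.\ $0 = \lambda_0 < \lambda_1 \le \lambda_2 \le \cdots \nearrow 1$, equivalently $\mu_k \in (-1,1]$ with $\mu_0 = 1$ simple and $\mu_k \to 0$. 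That the eigenvalues accumulate at $0$ (hence $\lambda_k \nearrow 1$, once we know $\mu_k$ are real) is immediate from compactness. The constant function is an eigenfunction of $K$ with eigenvalue $1$ (because $\int_M J(s_{xy})\,d\mu_y = 1$), so $\mu_0 = 1$ is indeed an eigenvalue, giving $\lambda_0 = 0$.

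The main obstacle is the \emph{simplicity and extremality of the top eigenvalue}: one must show that $1$ is a simple eigenvalue of $K$ and that all other eigenvalues satisfy $|\mu_k| < 1$ (in particular the negative part of the spectrum cannot reach $-1$, and $\lambda_k > 0$ for $k\ge 1$). This is where connectedness of $M$ and the positivity of $J$ near the diagonal enter, via a Perron--Frobenius / Krein--Rutman type argument for the positive operator $K$. Concretely: if $K\varphi = \mu\varphi$ with $|\mu|=1$, testing against $|\varphi|$ and using $\int J(s_{xy})\,d\mu_y = 1$ together with positivity of the kernel on $\{s_{xy}<s_0\}$ forces $\varphi$ to have constant sign on the $s_0$-neighborhood of any point; a chaining argument over a finite cover of the connected compact manifold $M$ by $s_0$-balls then shows $\varphi$ has constant sign everywhere, hence (by the eigenvalue equation) $\varphi$ is a positive multiple of the constant and $\mu = 1$. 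This rules out $\mu=-1$, shows all $\mu_k<1$ for $k\ge1$ (so $\lambda_k>0$), and gives simplicity of $\mu_0=1$; strict positivity $\lambda_k>0$, i.e.\ $\mu_k>-1$, also follows since $\mu=-1$ has just been excluded. I would also note that the eigenfunctions inherit the regularity of $J$: from $\mu_k \varphi_k(x) = \int_M J(s_{xy})\varphi_k(y)\,d\mu_y$ with $\mu_k\neq 0$, the right-hand side is as regular in $x$ as $s\mapsto J(s)$ composed with the (smooth, away from the cut locus) distance function, so $\varphi_k$ is Lipschitz when $J$ is, matching the remark in the text.
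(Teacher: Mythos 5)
Your route is essentially the paper's: write $L=L_0-\mathrm{Id}$ with $L_0 u(x)=\int_M J(s_{xy})u(y)\,d\mu_y$, show $L_0$ is self-adjoint and compact (you via Hilbert--Schmidt on the compact $M$, the paper by mapping $L^2(M)$ into $C(M)$ with an equicontinuity bound coming from the Lipschitz hypothesis on $J$ -- both work), invoke the spectral theorem for compact self-adjoint operators to get the orthonormal basis and the accumulation of eigenvalues, and use connectedness of $M$ together with the positivity of $J$ near $0$ to show that the constant eigenfunction is, up to scalars, the only one with $L\varphi=0$; your Perron--Frobenius ``equality in the triangle inequality plus chaining'' argument replaces the paper's argument that the set where a continuous eigenfunction attains its maximum is open and closed, and it additionally excludes the eigenvalue $-1$ of $L_0$. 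One small slip: with the convention of the statement one should take $-L\varphi_k=\lambda_k\varphi_k$, so $\lambda_k=1-\mu_k$, not $\mu_k-1$ (and ``$\lambda_k>0$'' corresponds to $\mu_k<1$, not to $\mu_k>-1$); the paper's own wording is loose on the same point, since its proof works with $-L$.

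There is, however, a genuine gap relative to what the theorem asserts. Your argument locates the eigenvalues of $L_0$ only in $(-1,1]$, hence the $\lambda_k=1-\mu_k$ only in $[0,2)$ with $1$ as the sole accumulation point. The statement claims a \emph{nondecreasing} sequence converging to $1$, i.e.\ that every eigenvalue satisfies $\lambda_k\le 1$, equivalently that $L_0$ has no negative eigenvalues, i.e.\ $\langle L_0u,u\rangle_{L^2(M)}\ge 0$ for all $u$. Neither the Schur bound $\|L_0\|\le 1$ nor your Perron--Frobenius step gives this: they are perfectly compatible with some $\mu_k\in(-1,0)$, which would produce eigenvalues $\lambda_k\in(1,2)$ and destroy the monotone convergence from below. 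Pointwise nonnegativity of the kernel $J(s_{xy})$ does not by itself imply positive semidefiniteness of the integral operator, so this needs a separate argument; it is exactly the step the paper covers by asserting that $L_0$ is a positive operator (whence its eigenvalues $\gamma_k=1-\lambda_k$ are positive and decrease to $0$). As written, your proposal establishes the weaker conclusion (eigenvalues in $[0,2)$ accumulating only at $1$, $\lambda_0=0$ simple, eigenfunctions an orthonormal basis) but not the claimed $\lambda_k\nearrow 1$; you should either prove positivity of $L_0$ under the stated hypotheses on $J$ or flag it as an additional assumption, as the paper implicitly does.
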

\begin{proof} As already observed in \cite{GR} for the case of $\R^N$, we can write $L=L_0-I$ where $I$ is the identity operator and
 \[L_0u(x)=\int_{M} J(s_{xy})u(y)\,d\mu_y.
  \]
We then deduce that $\lambda$ is in the spectrum of -$L$ if an only if $\gamma=1-\lambda$ is in the spectrum of $L_0$.

It is easy to see that $L_0$ is a positive, symmetric operator. Moreover, since $J$ is smooth there holds that $L_0u\in C(M)$ for every $u\in L^2(M)$ and,
\[\|L_0u\|_{L^\infty(M)}\le \|J(s_{x0})\|_{L^2(M)}\|u\|_{L^2(M)},\ \ \| L_0u(x)-L_0u(\bar x)\|_{L^\infty(M)}\le C_{J,M}\|u\|_{L^2(M)}\,s_{x\bar x}.
 \]

Thus, $L_0$ is a compact operator from $L^2(M)$ to $C(M)$ and therefore, also in $L^2(M)$.
Moreover, its spectrum consists of a nonincreasing sequence of positive eigenvalues converging to 0. We conclude that the spectrum of $-L$ consists of a nondecreasing sequence of eigenvalues converging to 1.
It is easy to see that all the eigenvalues are nonnegative. This is, if $-L\varphi=\lambda\varphi$ and $\varphi\not\equiv0$, there holds that $\lambda\ge0$. In fact, by multiplying the equation by $\varphi$ and applying Fubini's theorem we get,
\[
 \begin{aligned}
\lambda\int_{M}\varphi^2\,d\mu&=-\int_{M}\int_{M} J(s_{xy})\big(\varphi(y)-\varphi(x)\big)\varphi(x)\,d\,\mu_y\,d\mu_x\\
&=\frac12\int_{M}\int_{M} J(s_{xy})\big(\varphi(y)-\varphi(x)\big)^2\,d\mu_x\,d\mu_y\ge0.
\end{aligned}
\]
Then we must have $\lambda\ge0$.

Moreover, since the constants are solutions to $Lu=0$, there holds that 0 is an eigenvalue of $L$.
Since $M$ is connected, this eigenvalue is simple. In fact, only the constants are solutions of the homogeneous equation: if $L\varphi=0$ then, $\varphi=L_0\varphi$ is a smooth function. Since $M$ is compact, there exists $x_0\in M$ such that $\varphi(x_0)=\max \varphi$. Let $A$ be the set of points where $\varphi(x)=\varphi(x_0)$. Then, $A$ is a closed, nonempty set. Let us see that it is open. In fact, if $\bar x\in A$,
\[ \varphi(\bar x)=\int_{M}J(s_{\bar xy})\varphi(y)\,d\mu_y\le \varphi(\bar x).
 \]
As a consequence, there is equality and  $\varphi(y)$ must be identically equal to $\varphi(\bar x)$ in the support of $J(s_{\bar xy})$ that contains a neighborhood of $\bar x$ by assumption.

Moreover, by the general spectral theory of positive, symmetric, compact operators in a Hilbert space, there is a set of eigenfunctions of $L_0$ associated to the sequence of eigenvalues $\gamma_k$ that is an orthonormal basis of $L^2(M)$. Since eigenfunctions of $L_0$ associated to $\gamma_k\searrow0$ are eigenfunctions of $-L$ associated to $\lambda_k=1-\gamma_k\nearrow1$, the theorem is proved.
\end{proof}

As a consequence, we have the following result:

\begin{coro} Let $u_0\in L^2(M)$. Let $u$ be the solution to \eqref{P}. Then,
 \[
  \|u(\cdot,t)-\langle u_0\rangle  \|_{L^2(M)}\le e^{-\lambda_1 t}\|u_0\|_{L^2(M)}
 \]
with $\langle u_0\rangle  =\fint_{M}u_0(x)\,d\mu_x$, the mean value of $u_0$ on $M$ and $\lambda_1>0$ the first nonzero eigenvalue of $-L$.

\end{coro}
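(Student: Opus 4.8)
The plan is to expand $u_0$ in the orthonormal eigenbasis $\{\varphi_k\}$ of $-L$ provided by the previous theorem, solve the Cauchy problem mode by mode, and exploit the spectral gap $\lambda_1>0$.

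First I would note that, since $L$ is a bounded operator on $L^2(M)$, the solution of \eqref{P} with datum $u_0\in L^2(M)$ is simply $u(\cdot,t)=e^{tL}u_0$, and this agrees with the solution of the integral equation \eqref{eqint} by uniqueness. Writing $u_0=\sum_{k\ge0}c_k\varphi_k$ with $c_k=\langle u_0,\varphi_k\rangle_{L^2(M)}$ and recalling $-L\varphi_k=\lambda_k\varphi_k$, one gets
\[
u(\cdot,t)=\sum_{k\ge0}c_k\,e^{-\lambda_k t}\,\varphi_k .
\]
Since $\lambda_0=0$ is simple with (normalized) eigenfunction $\varphi_0=|M|^{-1/2}$ the constant function, the $k=0$ term is $c_0\varphi_0=\big(\fint_M u_0\big)\cdot 1=\langle u_0\rangle$, so
\[
u(\cdot,t)-\langle u_0\rangle=\sum_{k\ge1}c_k\,e^{-\lambda_k t}\,\varphi_k .
\]

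Then, using orthonormality and the fact that $\lambda_k\ge\lambda_1$ for all $k\ge1$ (the sequence is nondecreasing),
\[
\|u(\cdot,t)-\langle u_0\rangle\|_{L^2(M)}^2=\sum_{k\ge1}c_k^2\,e^{-2\lambda_k t}\le e^{-2\lambda_1 t}\sum_{k\ge1}c_k^2\le e^{-2\lambda_1 t}\sum_{k\ge0}c_k^2=e^{-2\lambda_1 t}\|u_0\|_{L^2(M)}^2 ,
\]
and taking square roots gives the claimed estimate. One should also observe that $\lambda_1>0$ strictly, which is exactly the content of the simplicity of the eigenvalue $0$ established in the theorem (if $\lambda_1$ were $0$ there would be a nonconstant element in the kernel of $L$).

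I do not expect any real obstacle here: the statement is a direct corollary of the spectral decomposition, and the only points requiring a word of care are (i) identifying the projection onto $\varphi_0$ with the mean value $\langle u_0\rangle$, which uses $|M|<\infty$ since $M$ is compact, and (ii) justifying that $e^{tL}u_0$ is indeed the $L^2$-solution, which follows because $L=L_0-I$ is bounded on $L^2(M)$ so the series for $e^{tL}u_0$ converges in $L^2(M)$ and termwise differentiation in $t$ is legitimate. If one prefers to avoid invoking the semigroup, the same identity $u=\sum c_k e^{-\lambda_k t}\varphi_k$ can be checked directly: the partial sums solve \eqref{eqint} with truncated data and pass to the limit, again by boundedness of $L$.
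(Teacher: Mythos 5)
Your proof is correct and follows essentially the same route as the paper: expand in the orthonormal eigenbasis, identify the $k=0$ projection with $\langle u_0\rangle$, and use $\lambda_k\ge\lambda_1>0$ together with orthonormality to get the exponential decay. The extra remarks on justifying $u=e^{tL}u_0$ via boundedness of $L$ are fine but not a departure from the paper's separation-of-variables argument.
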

\begin{proof} By separation of variables we have that
 \[u(x,t)=
  \sum_{k=0}^\infty c_k e^{-\lambda_k t}\varphi_k(x)
 \]
with the sum converging in $L^2(M)$ uniformly in $t$, and $c_k=\int_{M}u_0(x)\varphi_k(x)\,d\mu_x$. Since there holds that $c_0\varphi_0(x)=\langle u_0\rangle  $ and $\lambda_k\ge\lambda_1>0$ for $k\ge1$,
we have the result.
\end{proof}

Now, if moreover $u_0\in L^\infty(M)$ we get the same decay in $L^\infty$ norm. In fact,
\begin{coro} Let $u_0\in L^\infty(M)$. Let $u$ be the solution to \eqref{P}. Then,
 \[
  \|u(\cdot,t)-\langle u_0\rangle  \|_{L^\infty(M)}\le C e^{-\lambda_1 t}\|u_0\|_{L^\infty(M)}
 \]
with $\langle u_0\rangle  =\fint_{M}u_0(x)\,d\mu_x$, the mean value of $u_0$ on $M$ and $\lambda_1>0$ the first nonzero eigenvalue of $-L$.

\end{coro}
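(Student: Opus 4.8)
The plan is to bootstrap the $L^2$ decay of the previous corollary to an $L^\infty$ decay by combining Duhamel's formula \eqref{eqint} with the smoothing property of the operator $L_0u(x)=\int_M J(s_{xy})u(y)\,d\mu_y$.

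First I would reduce to an initial datum of zero mean. Since the constant $\langle u_0\rangle$ is a stationary solution of \eqref{P}, the function $v:=u-\langle u_0\rangle$ is the (bounded) solution of \eqref{P} with initial datum $v_0:=u_0-\langle u_0\rangle$, and $\langle v_0\rangle=0$. Moreover $\|v_0\|_{L^\infty(M)}\le 2\|u_0\|_{L^\infty(M)}$ and, since $M$ is compact, $\|v_0\|_{L^2(M)}\le |M|^{1/2}\|v_0\|_{L^\infty(M)}$. Applying the previous corollary to $v$ (whose mean vanishes) gives $\|v(\cdot,t)\|_{L^2(M)}\le e^{-\lambda_1 t}\|v_0\|_{L^2(M)}$.

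Next, since $v$ is a bounded solution it satisfies \eqref{eqint}, which for the zero-mean function $v$ reads
\[
v(x,t)=e^{-t}v_0(x)+\int_0^t e^{-(t-r)}\,(L_0v)(x,r)\,dr .
\]
Using the estimate $\|L_0w\|_{L^\infty(M)}\le \|J(s_{\cdot O})\|_{L^2(M)}\,\|w\|_{L^2(M)}$ already proved in the spectral theorem above (the right-hand norm is finite because $J$ is bounded and $M$ compact) together with the $L^2$ decay of $v$, we obtain
\[
\|v(\cdot,t)\|_{L^\infty(M)}\le e^{-t}\|v_0\|_{L^\infty(M)}+C\,\|v_0\|_{L^2(M)}\int_0^t e^{-(t-r)}e^{-\lambda_1 r}\,dr .
\]
Finally one uses that $\lambda_1<1$, which holds because the eigenvalues $\gamma_k=1-\lambda_k$ of $L_0$ are strictly positive; then $\int_0^t e^{-(t-r)}e^{-\lambda_1 r}\,dr\le (1-\lambda_1)^{-1}e^{-\lambda_1 t}$ and $e^{-t}\le e^{-\lambda_1 t}$, so that $\|v(\cdot,t)\|_{L^\infty(M)}\le C\,e^{-\lambda_1 t}\|u_0\|_{L^\infty(M)}$, which is the claim since $v=u-\langle u_0\rangle$.

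The only mildly delicate points are checking that $\lambda_1$ is strictly less than $1$ (so that the constant $(1-\lambda_1)^{-1}$ is meaningful), which is immediate from the structure of the spectrum of the compact operator $L_0$ described above, and recognizing that the $L^2\to L^\infty$ mapping bound for $L_0$ — the genuine regularizing ingredient here — is precisely what the proof of the spectral theorem already furnishes; beyond that the argument is a routine Gronwall-type estimate and presents no real obstacle.
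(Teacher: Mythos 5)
Your proof is correct and follows essentially the same route as the paper: Duhamel's formula \eqref{eqint}, the Cauchy--Schwarz bound $\|L_0 w\|_{L^\infty(M)}\le \|J(s_{\cdot O})\|_{L^2(M)}\|w\|_{L^2(M)}$ combined with the $L^2$ decay corollary, and the computation of $\int_0^t e^{-(t-r)}e^{-\lambda_1 r}\,dr$ using $1-\lambda_1=\gamma_1>0$. The only cosmetic difference is that you subtract the mean from the initial datum at the outset, whereas the paper keeps $u-\langle u_0\rangle$ and inserts $\langle u_0\rangle$ into the convolution term via $\int_M J(s_{xy})\,d\mu_y=1$; the estimates are identical.
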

\begin{proof} Recall that
\[
 u(x,t)=e^{-t}u_0(x)+\int_0^t\int_M e^{-(t-r)}J(s_{xy})u(y,r)\,d\mu_y\,dr.
\]
Using that $\int_0^te^{-(t-r)}\,dr=1-e^{-t}$, we have
\[
\begin{aligned}
 u(x,t)-\langle u_0\rangle  =& e^{-t}(u_0(x)-\langle u_0\rangle  )+\int_0^t e^{-(t-r)}\int_{M}J(s_{xy})(u(y,r)-\langle u_0\rangle  )\,d\mu_y\,dr\\
&\le Ce^{-t}\|u_0\|_\infty+C_J\|u_0\|_2\int_0^t e^{-(t-r)}e^{-\lambda_1 r}\,dr\\
&=Ce^{-t}\|u_0\|_\infty+C_J\|u_0\|_2\frac{e^{-\lambda_1 t}-e^{-t}}{1-\lambda_1}.
\end{aligned}
\]
Since $1-\lambda_1=\gamma_1>0$, we have that
\[
 |u(x,t)-\langle u_0\rangle  |\le \frac{\bar C}{1-\lambda_1}\big(\|u_0\|_\infty+\|u_0\|_2\big) e^{-\lambda_1 t},
\]
and we immediately obtain the desired conclusion.
\end{proof}

\section{Time asymptotics on hyperbolic space}

In this section we study the large time asymptotics of the solution of \eqref{P} in the case of the hyperbolic space $\H^N$, $N\ge2$.

For the sake of clarity we divide the section into several subsections where we recall the definition and properties of the Fourier transform on hyperbolic space and the study of the solution of translations of the Heat-Beltrami operator, among other results and ideas that are used in the last subsections in order to prove the main result of this section, Theorem \ref{theorem-hyperbolic}.

\subsection{Preliminaries on hyperbolic space and its Fourier transform}

Hyperbolic space $\hh^N$ may be defined as the upper branch of a hyperboloid in $\rr^{N+1}$ with the metric induced by the Lorentzian metric in $\rr^{N+1}$ given by $-dx_0^2+dx_1^2+\ldots+dx_N^2$, i.e., $\hh^N  =\{(x_0,\ldots,x_N)\in \rr^{N+1}: x_0^2-x_1^2-\ldots-x_N^2=1, \; x_0>0\}$, which in polar coordinates may be parameterized as
$$\hh^N= \{x\in \rr^{N+1}: x= (\cosh r, \sinh r \,\theta), \; r\geq 0, \; \theta\in \ms^{N-1}\},$$
with the metric $g_{\mathbb H^N}=dr^2+\sinh^2 r\, d\theta^2,$ where $d\theta^2$ is the canonical metric on $\ms^{N-1}$.
Under these definitions the Laplace-Beltrami operator is given by
$$\Delta_{\hh^N}=\partial_{rr}+(N-1) \frac{\cosh r}{\sinh r}\, \partial_r+\frac{1}{\sinh^2 r}\,\Delta_{\ms^{N-1}},$$
and the volume element is $$d\mu_x=\sinh^{N-1}r\;dr \, d\theta.$$
We denote by $[\cdot, \cdot]$ the internal product induced by the Lorentzian metric
$$[x,x']=x_0x_0'-x_1x_1'-\ldots -x_Nx_N'.$$
The hyperbolic distance between two arbitrary points is given by
$$s_{xx'}=d(x,x')=\cosh^{-1}([x,x']),$$
and in the particular case that $x=(\cosh r, \sinh r \,\omega)$, $x'=O$,
 $$s_{xO}=d(x,O)=r.$$
 The unit sphere $\mathbb S^{N-1}$ is identified with the subset $\{x\in\mathbb R^{N+1} \,:\,[x,x]=0, x_0=1\}$ via the map $b(\omega)=(1,\omega)$ for $\omega\in\mathbb S^{N-1}$.

Finally, note that hyperbolic space may be written as a symmetric space of rank one as the quotient
$\mathbb H^N\approx \frac{SO(1,N)}{SO(N)}.$

Now we start by reviewing some basic facts about the Fourier transform on hyperbolic space, which is a particular case of the Helgason-Fourier transform on symmetric spaces. Some standard references are \cite{Bray,Georgiev,H,Terras:book1,Terras:book2}. First, the generalized eigenfunctions of the Laplace-Beltrami operator may be written as
$$h_{\lambda,\omega}(x)=[x,b(\omega)]^{i\lambda-\frac{N-1}{2}}, \quad x\in \hh^N,$$
where $\lambda\in \rr$ and $\omega \in \ms^{N-1}$. These satisfy
$$\Delta_{\hh^N} h_{\lambda,\omega}=-\left(\lambda^2+\tfrac{(N-1)^2}{4}\right)h_{\lambda,\omega}.$$
In analogy to the Euclidean space, the Fourier transform on $\mathbb H^N$ is defined by
\begin{equation*}\hat{u}(\lambda, \omega)=\int_{\hh^N} u(x)\,h_{\lambda,\omega}(x)\,d\mu_x,
\end{equation*}
for $\lambda\in\rr$, $\omega\in \ms^{N-1}$. Moreover, the following inversion formula holds:
\begin{equation}\label{inversion}
u(x)=\int_{-\infty}^{\infty}\int_{\ms^{N-1}}\bar{h}_{\lambda,\omega}(x)\hat{u}(\lambda,\omega)
\,\frac{d\omega \, d\lambda }{|c(\lambda)|^2},\end{equation}
where $c(\lambda)$ is the Harish-Chandra coefficient:
$$\frac{1}{|c(\lambda)|^2}=\frac{1}{2(2\pi)^N}\frac{|\Gamma(i\lambda+(\frac{N-1}{2})|^2}{|\Gamma(i\lambda)|^2}.$$
There is also a Plancherel formula:
\begin{equation*}
\int_{\hh^N}|u(x)|^2\,d\mu_x=\int_{\rr\times \mathbb S^{N-1}}|\hat{u}(\lambda,\omega)|^2\frac{d\omega \; d\lambda }{|c(\lambda)|^2},\end{equation*}
which implies that the Fourier transform extends to an isometry of $L^2(\mathbb H^N)$ onto $L^2(\mathbb R_+\times\mathbb S^{N-1},|c(\lambda)|^{-2}d\lambda \,d\omega)$.

If $u$ is a radial function, then $\hat u$ is also radial, and the above formulas simplify. In this setting, it is customary to normalize the measure of $\mathbb S^{N-1}$ to one in order not to account for multiplicative constants. Thus one defines the spherical Fourier transform as
\begin{equation*}
\hat u(\lambda)=\int_{\mathbb H^N} u(x)\Phi_{-\lambda}(x)\,d\mu_x
\end{equation*}
where
\begin{equation*}
\Phi_\lambda(x)=\int_{\mathbb S^{N-1}} h_{-\lambda,\omega}(x)\,d\omega
\end{equation*}
is known as the elementary spherical function. In addition, \eqref{inversion} reduces to
\begin{equation*}
u(x)=\int_{-\infty}^{\infty}\hat{u}(\lambda)\Phi_\lambda(x)
\,\frac{d\lambda }{|c(\lambda)|^2}.\end{equation*}
Using polar coordinates on $\mathbb H^N$ (with some abuse in notation since $\Phi_\lambda(x)$ is radial) we have
\begin{equation*}
\Phi_\lambda(r)=c_N\int_0^\pi
(\cosh r-\sinh r\cos\theta)^{i\lambda-\frac{N-1}{2}}(\sin\theta)^{N-2}d\theta.
\end{equation*}
Notice that
\begin{equation*}
\Phi_{-\lambda}(r)=\Phi_\lambda(r)=\Phi_\lambda(-r),\end{equation*}
and that we have normalized $\Phi_\lambda(0)=1$. As a function of $\lambda$, $\Phi_\lambda$ is an even entire function on the whole complex plane, and in particular, we have the expansion:
$$\Phi_\lambda(r)=\Phi_0(r)
+\left.\partial_{\lambda\lambda}\right|_{\lambda=0}\Phi_\lambda(r)\frac{\lambda^2}{2}+o(\lambda^2).$$
Another integral formula is
$$\Phi_\lambda(r)=\frac{c_N}{\sinh^{N-2}r}\int_{-r}^r e^{i\lambda s} (\cosh r-\cosh s)^{\frac{N-1}{2}-1}ds,$$
from where it is easy to estimate
\begin{equation}\label{estimate-symmetric}|\Phi_{\lambda}(r)|\leq |\Phi_0(r)|\leq Ce^{-\frac{N-1}{2}r}(1+r).\end{equation}
For large $|\lambda|$  the estimate may be improved to (see Lemma 2.2 in \cite{Tataru:Strichartz-hyperbolic})
\begin{equation*}
|\Phi_{\lambda}(r)|\leq \frac{C}{|\lambda|^{\frac{N-1}{2}}}\,e^{-\frac{N-1}{2}r} \quad \text{when}\quad|\lambda|\to\infty.
\end{equation*}

Another result in the case of radially symmetric functions is the following (see \cite{Bray}, Thm. 3.3.1 (iii) or \cite{Helgason-Groups-Geometic-Analysis} where this result is actually proven):
if $u\in C_0^\infty(\H^N)$ is radially symmetric, there holds that $\hat u(\lambda)$ can be extended to $\mathbb C$ as an entire function that decays at infinity faster than any negative power of $|\lambda|$.

\medskip

Finally, we define the convolution operator as
$$u*v(x)=\int_{\mathbb H^N} u(x')v(\tau_x^{-1}x')\,d\mu_{x'},$$
where $\tau_x:\mathbb H^N\to \mathbb H^N$ is an isometry that takes $x$ into $O$. If $v$ is a  radial function, then the convolution may be written as
$$u*v(x)=\int_{\mathbb H^N} u(x')v(s_{xx'})\,d\mu_{x'},$$
and we have the property
\begin{equation}\label{product-convolution}\widehat{u*v}=\hat u \,\hat v,\end{equation}
in analogy to the usual Fourier transform.

It is also interesting to observe that
$$\widehat{\Delta_{\mathbb H^N} u}=-\lp \lambda^2+\tfrac{(N-1)^2}{4}\rp \hat u.$$
On hyperbolic space there is a well developed theory of Fourier multipliers. In $L^2$ spaces everything may be written out explicitly. For instance, let $m(\lambda)$ be a multiplier in Fourier variables. A function $\hat u(\lambda,\omega)=\hat m(\lambda) u_0(\lambda,\omega)$, by the inversion formula for the Fourier transform \eqref{inversion} and expression \eqref{k}, may be written as
\begin{equation}\label{inversion1}\begin{split}u(x)=&
\int_{-\infty}^{\infty}\int_{\ms^{N-1}}m(\lambda) \hat{u}_0(\lambda,\omega)\bar{h}_{\lambda,\omega}(x)
\,\frac{d\omega \, d\lambda }{|c(\lambda)|^2}\\
=&\int_{-\infty}^{\infty}\int_{\hh^N}m(\lambda) u_0(x') k_\lambda(x,x')\,
d\mu_{x'} \; d\lambda,
\end{split}\end{equation}
where we have denoted
\begin{equation}\label{k}
k_\lambda(x, x') =\frac{1}{|c(\lambda)|^2}\int_{\ms^{N-1}} \bar{h}_{\lambda,\omega}(x) \,h_{\lambda,\omega}(x')\,d\omega.
\end{equation}
It is known that $k_\lambda$ is invariant under isometries, i.e.,
\begin{equation*}k_\lambda(x, x') =k_\lambda(\tau x, \tau x'),\end{equation*}
for all $\tau\in SO(1,N)$,
and in particular,
$$k_\lambda(x, x') =k_\lambda(s_{xx'}),$$
so many times we will simply write $k_\lambda(\rho)$ for $\rho=s_{xx'}$.
We recall the following formulas for $k_\lambda$ (see, for instance, \cite{Banica:Schrodinger}, which refers to \cite{Georgiev}):

\begin{lemm}\label{lemma-k-lambda} For $N\geq 3$ odd,
\begin{equation} k_\lambda(\rho)=c_N
\left(\frac{\partial_\rho}{\sinh \rho}\right)^\frac{N-1}{2}(\cos\lambda \rho),
\label{Kodd}\end{equation}
and for $N\geq 2$ even,
\begin{equation}\label{Keven}
k_\lambda(\rho)=c_N
\int_\rho^\infty\frac{\sinh s}{\sqrt{\cosh s-\cosh \rho}}
\left(\frac{\partial_s}{\sinh s}\right)^\frac{N}{2}(\cos\lambda s)\, ds.
\end{equation}
\end{lemm}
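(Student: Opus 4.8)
The plan is to start from the defining integral \eqref{k} for $k_\lambda$, reduce it to a one-dimensional integral using the elementary spherical function $\Phi_\lambda$, and then apply known integral representations of $\Phi_\lambda$ together with a dimension-reduction identity. Concretely, since $k_\lambda$ is a function of the distance alone, I would evaluate $k_\lambda(\rho)$ by choosing $x=O$ and $x'$ at distance $\rho$ from $O$, so that $h_{\lambda,\omega}(O)=1$ and the angular integral $\int_{\S^{N-1}}h_{\lambda,\omega}(x')\,d\omega$ becomes, by the very definition given in the preliminaries, $\Phi_{-\lambda}(\rho)=\Phi_\lambda(\rho)$. Hence $k_\lambda(\rho)=|c(\lambda)|^{-2}\,\Phi_\lambda(\rho)$, and everything is reduced to an identity for the spherical function. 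From the integral formula already recorded in the text,
\[
\Phi_\lambda(\rho)=\frac{c_N}{\sinh^{N-2}\rho}\int_{-\rho}^{\rho} e^{i\lambda s}(\cosh\rho-\cosh s)^{\frac{N-1}{2}-1}\,ds,
\]
one sees that, up to the explicit factor $|c(\lambda)|^{-2}$ and even/odd symmetrisation $e^{i\lambda s}\mapsto\cos\lambda s$, the formula already has the shape of \eqref{Keven}; the bulk of the work is to convert the prefactor $\sinh^{-(N-2)}\rho$ and the exponent $\tfrac{N-1}{2}-1$ into the iterated differential operator $\big(\partial_\rho/\sinh\rho\big)$ applied the appropriate number of times.

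The key computational device is the one-line identity
\[
\frac{1}{\sinh\rho}\,\partial_\rho\!\left[(\cosh\rho-\cosh s)^{k}\right]
= k\,(\cosh\rho-\cosh s)^{k-1},
\]
valid for any real exponent $k$, which follows from $\partial_\rho\cosh\rho=\sinh\rho$. Applying this repeatedly lets one trade each power of $(\cosh\rho-\cosh s)$ for an application of $\partial_\rho/\sinh\rho$, exactly as in the classical descent argument for the wave kernel on $\mathbb H^N$. For $N$ odd one iterates $\tfrac{N-1}{2}$ times starting from $(\cosh\rho-\cosh s)^{-1/2}$-type data and uses that, by a change of variables $\cosh s=\cosh\rho\cos t$ or directly, the base case collapses to $\int_{-\rho}^{\rho}e^{i\lambda s}(\cosh\rho-\cosh s)^{-1/2}\,ds$ producing a multiple of $\cos(\lambda\rho)$ after the reduction; this yields \eqref{Kodd}. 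For $N$ even one application fewer is possible without the singular weight, which is precisely why the half-integer power $\tfrac{N-1}{2}-1$ leaves a residual integral with the Abel-type kernel $\sinh s/\sqrt{\cosh s-\cosh\rho}$ after integrating once more — giving \eqref{Keven}. Alternatively, and more cheaply, I would simply cite \cite{Banica:Schrodinger} and \cite{Georgiev}, where these formulas are derived, and limit the argument here to indicating the reduction $k_\lambda(\rho)=|c(\lambda)|^{-2}\Phi_\lambda(\rho)$ and the structure of the descent.

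The main obstacle I anticipate is bookkeeping of the constants and the precise integration-by-parts boundary terms: because $(\cosh\rho-\cosh s)$ vanishes at the endpoints $s=\pm\rho$ to a fractional order, each time one pulls $\partial_\rho$ outside the integral sign one must check that no boundary contribution survives (in the odd case) or must track exactly the surviving Abel integral (in the even case). The differentiation under the integral sign is justified by the integrability of the singular weight for $N\ge2$, and the even/odd split is forced by the parity of $N-1$. Since the statement is presented as a recollection of known formulas, it is legitimate to keep the proof short: establish $k_\lambda=|c(\lambda)|^{-2}\Phi_\lambda$, record the descent identity above, and refer to the cited literature for the constant $c_N$ and the endpoint verification.
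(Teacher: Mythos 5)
The paper does not actually prove this lemma: it is recorded as a known formula, quoted from \cite{Banica:Schrodinger} (which refers back to \cite{Georgiev}). So your ``cheaper alternative'' --- establish the reduction $k_\lambda(\rho)=|c(\lambda)|^{-2}\Phi_\lambda(\rho)$ and cite the literature for \eqref{Kodd}--\eqref{Keven} --- coincides with what the paper itself does, and that reduction is correct: taking $x=O$ in \eqref{k} gives $h_{\lambda,\omega}(O)=1$, the angular integral is $\Phi_{-\lambda}(x')=\Phi_\lambda(\rho)$, and the normalization of $d\omega$ only affects the constant.

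If, however, you intend the sketched descent to stand as a derivation, it has concrete gaps. First, the one-line identity $\frac{1}{\sinh\rho}\partial_\rho(\cosh\rho-\cosh s)^{k}=k(\cosh\rho-\cosh s)^{k-1}$ applied to $F_N(\lambda,\rho)=\int_{-\rho}^{\rho}e^{i\lambda s}(\cosh\rho-\cosh s)^{\frac{N-3}{2}}ds$ yields $\frac{\partial_\rho}{\sinh\rho}F_{N+2}=\tfrac{N-1}{2}F_N$, i.e.\ it expresses the \emph{lower}-dimensional integral as a derivative of the \emph{higher}-dimensional one; \eqref{Kodd} requires the opposite direction, writing the $N$-dimensional kernel as iterated derivatives of $\cos\lambda\rho$. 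Moreover the prefactor $\sinh^{-(N-2)}\rho$ in the representation of $\Phi_\lambda$ does not commute with $\partial_\rho/\sinh\rho$, so the iteration cannot be performed verbatim on $\Phi_\lambda$. The mechanism that actually works is the shift-operator recurrence $\frac{\partial_\rho}{\sinh\rho}\,\Phi_\lambda^{(N)}=-c\bigl(\lambda^2+\tfrac{(N-1)^2}{4}\bigr)\Phi_\lambda^{(N+2)}$ combined with $|c^{(N+2)}(\lambda)|^{-2}=\mathrm{const}\cdot\bigl(\lambda^2+\tfrac{(N-1)^2}{4}\bigr)|c^{(N)}(\lambda)|^{-2}$; it is exactly this cancellation that makes the constants $c_N$ in \eqref{Kodd}--\eqref{Keven} independent of $\lambda$, a point your reduction (which leaves the $\lambda$-dependent factor $|c(\lambda)|^{-2}$ in front of $\Phi_\lambda$) never addresses. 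Second, the odd base case is misstated: for $N$ odd the exponent $\tfrac{N-1}{2}-1$ is a nonnegative integer, the chain terminates at $\int_{-\rho}^{\rho}e^{i\lambda s}ds=2\sin(\lambda\rho)/\lambda$ (or at $\cos\lambda\rho$ in dimension one), not at a $(\cosh\rho-\cosh s)^{-1/2}$ integral, and $\int_{-\rho}^{\rho}e^{i\lambda s}(\cosh\rho-\cosh s)^{-1/2}ds$ is not a multiple of $\cos\lambda\rho$ (it is essentially the $N=2$ spherical function). Third, in the even case \eqref{Keven} involves an integral over $(\rho,\infty)$ against $(\cosh s-\cosh\rho)^{-1/2}$, whereas the representation of $\Phi_\lambda$ you start from is over $(-\rho,\rho)$ against $(\cosh\rho-\cosh s)^{-1/2}$; converting one into the other, and absorbing the $\lambda$-dependent factor coming from $|c(\lambda)|^{-2}$ in dimension two, is a genuine classical step (of Mehler--Dirichlet type) that ``integrating once more'' does not supply. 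As a citation your proposal is fine and matches the paper; as a self-contained proof it would need these points filled in.
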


Before we state our results on hyperbolic space we prove some preliminary statements on the asymptotic behavior of the kernel $k_\lambda$:

\begin{lemm} \label{lemma-derivative1} For each $m=1,2,\ldots$, the derivative
\begin{equation*}
  \lp\frac{\partial_\rho}{\sinh \rho}\rp^{m} \cos (\lambda \rho)
\end{equation*}
is a finite linear combination of terms of the form
\begin{equation*}
  f(\rho^2)g(\lambda,\rho)
\end{equation*}
where $f(z)$ is an analytic function on $|z|\leq\pi^2$, and $g$ can be described in two different ways:
\begin{itemize}
  \item[\emph{i.}] First, 
   \begin{equation*}g(\lambda,\rho)=\lambda^n g_1((\lambda \rho)^2)\end{equation*}
  where $g_1(z)$ is an analytic function on $\mathbb C$, and $n$ is some integer $2\le n\leq 2m$.
  \item[\emph{ii.}] Second, 
  $g$ can be written as a finite linear combination of terms of the form
  \begin{equation*}\lambda^n\frac{\sin(\lambda\rho)}{(\lambda\rho)^l}\quad \text{or}\quad \lambda^n\frac{\cos(\lambda\rho)}{(\lambda\rho)^l},\end{equation*}
  for some nonnegative integers $n,l$, with $2\le n\leq 2m$.
  \end{itemize}
\end{lemm}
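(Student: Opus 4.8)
The plan is to prove the claim by induction on $m$, exploiting the fact that the operator $\frac{\partial_\rho}{\sinh\rho}$ does not mix the two "types" of building blocks in an uncontrollable way. For the base case $m=1$, one computes directly
\[
\frac{\partial_\rho}{\sinh\rho}\cos(\lambda\rho)=-\lambda^2\,\frac{\sin(\lambda\rho)}{\lambda\rho}\cdot\frac{\rho}{\sinh\rho},
\]
and here $f(\rho^2)=\rho/\sinh\rho$ is analytic on $|\rho^2|\le\pi^2$ (its only singularities are at $\rho=\pm i\pi k$, the nearest being $\rho^2=-\pi^2$, which lies on the boundary — one should state the estimate on a slightly smaller disc or note that analyticity holds on the open disc and is what is used), while $g(\lambda,\rho)=-\lambda^2\frac{\sin(\lambda\rho)}{\lambda\rho}$ simultaneously has the form \emph{i} with $n=2$, $g_1(z)=-\frac{\sin\sqrt z}{\sqrt z}$ entire, and the form \emph{ii} with $n=2$, $l=1$.

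\textbf{Induction step.} Assuming the statement for $m$, one applies $\frac{\partial_\rho}{\sinh\rho}$ once more to a generic term $f(\rho^2)g(\lambda,\rho)$. By the Leibniz rule,
\[
\frac{\partial_\rho}{\sinh\rho}\big(f(\rho^2)g(\lambda,\rho)\big)
=\frac{f'(\rho^2)\cdot 2\rho}{\sinh\rho}\,g(\lambda,\rho)+f(\rho^2)\,\frac{\partial_\rho g(\lambda,\rho)}{\sinh\rho}.
\]
In the first summand, $\frac{2\rho f'(\rho^2)}{\sinh\rho}=\big(\frac{\rho}{\sinh\rho}\big)\cdot 2f'(\rho^2)$ is again (analytic on the disc)$\times$(analytic on the disc), hence of the form $\tilde f(\rho^2)$, and $g$ is unchanged, so this term is already of the desired shape. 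For the second summand one needs a companion lemma: differentiating a $g$ of type \emph{i} (resp. type \emph{ii}) and dividing by $\sinh\rho$ again produces a finite linear combination of $\tilde f(\rho^2)\tilde g(\lambda,\rho)$ with $\tilde g$ of type \emph{i} (resp. type \emph{ii}), and the exponent of $\lambda$ increases by at most $2$ (so $n\le 2m$ is preserved as $n\le 2(m+1)$). Concretely, for type \emph{i}: with $g=\lambda^n g_1((\lambda\rho)^2)$ one gets $\partial_\rho g=\lambda^{n+2}\cdot 2\rho\, g_1'((\lambda\rho)^2)$, so $\frac{\partial_\rho g}{\sinh\rho}=\frac{\rho}{\sinh\rho}\cdot\lambda^{n+2}\cdot 2g_1'((\lambda\rho)^2)$ — the factor $\frac{\rho}{\sinh\rho}$ gets absorbed into $f$, and $\lambda^{n+2}g_1'((\lambda\rho)^2)$ is again of type \emph{i} with the $\lambda$-power raised by $2$, and since we also need $n+2\ge 2$, which is automatic. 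For type \emph{ii}, one differentiates $\lambda^n\frac{\sin(\lambda\rho)}{(\lambda\rho)^l}$ (and the cosine analogue) using $\partial_\rho\frac{\sin(\lambda\rho)}{(\lambda\rho)^l}=\lambda\frac{\cos(\lambda\rho)}{(\lambda\rho)^l}-l\lambda\frac{\sin(\lambda\rho)}{(\lambda\rho)^{l+1}}$, multiplies by $\frac{\rho}{\sinh\rho}\cdot\frac1\rho$... here one must be slightly careful: $\frac{1}{\sinh\rho}=\frac{1}{\rho}\cdot\frac{\rho}{\sinh\rho}$, and the $\frac1\rho$ combines with $\lambda\cos(\lambda\rho)$ to give $\lambda^2\frac{\cos(\lambda\rho)}{\lambda\rho}$, i.e. increases the exponent $l$ by one and raises the $\lambda$-power to keep the expression homogeneous; the residual $\frac{\rho}{\sinh\rho}$ is absorbed into $\tilde f$. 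One checks the constraint $n\ge 2$ survives because every differentiation-plus-division step raises the $\lambda$-power, and it is already $\ge 2$ after the first step.

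\textbf{Main obstacle.} The genuine content — as opposed to bookkeeping — is twofold. First, the closure property "$\partial_\rho/\sinh\rho$ maps type \emph{ii} terms to linear combinations of type \emph{ii} terms" requires organizing the computation so that the stray power of $\rho$ coming from $\frac{1}{\sinh\rho}=\frac{1}{\rho}\cdot\frac{\rho}{\sinh\rho}$ is always cancelled; this works precisely because the trigonometric factors always appear in the scale-invariant combinations $\frac{\sin(\lambda\rho)}{(\lambda\rho)^l}$, $\frac{\cos(\lambda\rho)}{(\lambda\rho)^l}$, so that multiplying by $\lambda/(\lambda\rho)$ rather than $1/\rho$ keeps us in the class at the cost of one unit of $l$ and compensating powers of $\lambda$. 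I would isolate this as a small auxiliary lemma (closure of the span of $\{\frac{\rho}{\sinh\rho}^{\,j}\}\times\{\lambda^n\frac{\sin(\lambda\rho)}{(\lambda\rho)^l},\lambda^n\frac{\cos(\lambda\rho)}{(\lambda\rho)^l}\}$ under $\partial_\rho/\sinh\rho$) and prove it by a one-line Leibniz computation. Second, one must verify the bound $n\le 2m$ on the power of $\lambda$ by tracking that each application of the operator raises $n$ by at most $2$ and the $m=1$ term has $n=2$; this is immediate. The equivalence of descriptions \emph{i} and \emph{ii} need only be checked for the single generating term at $m=1$ and then propagated through the induction, since both the type-\emph{i} and type-\emph{ii} classes are separately closed under the operator. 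I expect no essential difficulty beyond keeping the two parallel inductions (one per description) synchronized.
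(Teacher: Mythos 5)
Your proposal is correct and follows essentially the same route as the paper's proof: the identical base-case computation $\frac{\partial_\rho}{\sinh\rho}\cos(\lambda\rho)=-\lambda^2\frac{\rho}{\sinh\rho}\frac{\sin(\lambda\rho)}{\lambda\rho}$, followed by an induction via the Leibniz rule showing that each of the two classes (type \emph{i} and type \emph{ii}) is closed under $\frac{\partial_\rho}{\sinh\rho}$, with the $\lambda$-power rising by at most $2$ per step so that $2\le n\le 2m$. Your side remark that $\rho/\sinh\rho$ has a pole at $\rho^2=-\pi^2$, so analyticity should be stated on the open disc (or a slightly smaller one), is a fair refinement that the paper glosses over, and is harmless since the lemma is only applied for real $\rho$.
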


\begin{proof}
By induction. For the initial step $m=1$, we calculate
$$\lp\frac{\partial_\rho}{\sinh \rho}\rp \cos (\lambda \rho)=\lambda^2 \lp \frac{\rho}{\sinh \rho} \rp \lp\frac{-\sin (\lambda \rho)}{\lambda \rho}\rp, $$
which is of the specified form. Then, for the step $m+1$, first calculate
$$\frac{\partial_\rho}{\sinh \rho} \left\{f(\rho^2)g_1((\lambda \rho)^2)\right\} =
\frac{2f'(\rho^2)\rho}{\sinh \rho}\,\cdot \,g((\lambda \rho)^2)+\frac{2f(\rho^2)\rho}{\sinh \rho}\,\cdot\,\lambda^2 g'((\lambda \rho)^2),$$
which has the desired form \emph{i}. On the other hand, for \emph{ii.} just notice that
$$\frac{\partial_\rho}{\sinh \rho} \left\{f(\rho^2)\frac{\sin(\lambda\rho)}{(\lambda \rho)^l}\right\} =
\frac{2f'(\rho^2)\rho}{\sinh \rho} \,\cdot\,\frac{\sin(\lambda\rho)}{(\lambda \rho)^l}+\frac{f(\rho^2)\rho}{\sinh \rho}\,\cdot\,\lambda^2\frac{\cos(\lambda\rho)}{(\lambda\rho)^{l+1}}-l\frac{f(\rho^2)\rho}{\sinh \rho}\, \cdot\,\lambda^2\frac{\sin(\lambda\rho)}{(\lambda\rho)^{l+2}},$$
and a similar calculation holds when $\sin\lambda\rho$ is replaced by $\cos\lambda\rho$.

This concludes the proof of the Lemma.
\end{proof}

\begin{lemm}\label{lemma-derivative}
For a function $u=u(\rho)$, and $m=1,2,\ldots$,
\begin{equation*}
  \lp \frac{\partial_\rho}{\sinh \rho}\rp^m u=\sum_{j=1}^{m} h^m_j(\rho) \,\partial_\rho^{(j)} u,
\end{equation*}
such that, for $\rho\geq 1$, we have the bound
\begin{equation*}
     |h^m_j(\rho)| \leq C_j\frac{1}{(\sinh \rho)^{m}},\quad  \quad j=1,\ldots,m.
\end{equation*}
\end{lemm}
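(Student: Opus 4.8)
The plan is to prove this by induction on $m$, tracking the structure of the differential operator $\left(\frac{\partial_\rho}{\sinh\rho}\right)^m$ as a combination of genuine derivatives $\partial_\rho^{(j)} u$ with explicit coefficient functions $h^m_j$. For the base case $m=1$ we simply have $\left(\frac{\partial_\rho}{\sinh\rho}\right) u = \frac{1}{\sinh\rho}\,\partial_\rho u$, so $h^1_1(\rho) = \frac{1}{\sinh\rho}$, which obviously satisfies $|h^1_1(\rho)| \le C_1 (\sinh\rho)^{-1}$ for $\rho \ge 1$.

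For the inductive step, I would assume the representation $\left(\frac{\partial_\rho}{\sinh\rho}\right)^m u = \sum_{j=1}^m h^m_j(\rho)\,\partial_\rho^{(j)} u$ and apply one more copy of $\frac{\partial_\rho}{\sinh\rho}$. Using the product rule,
\[
\frac{\partial_\rho}{\sinh\rho}\Big(h^m_j(\rho)\,\partial_\rho^{(j)} u\Big) = \frac{(h^m_j)'(\rho)}{\sinh\rho}\,\partial_\rho^{(j)} u + \frac{h^m_j(\rho)}{\sinh\rho}\,\partial_\rho^{(j+1)} u,
\]
so that the new coefficients are given by the recursion $h^{m+1}_j(\rho) = \dfrac{(h^m_j)'(\rho)}{\sinh\rho} + \dfrac{h^m_{j-1}(\rho)}{\sinh\rho}$ (with the convention $h^m_0 \equiv 0$ and $h^m_{m+1}\equiv 0$), and the index $j$ indeed ranges over $1,\ldots,m+1$. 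It remains to propagate the decay bound. The term $\frac{h^m_{j-1}}{\sinh\rho}$ is immediately bounded by $C_{j-1}(\sinh\rho)^{-(m+1)}$ by the inductive hypothesis. For the term $\frac{(h^m_j)'}{\sinh\rho}$ I need to control $(h^m_j)'$, which is where a little care is needed: differentiating $(\sinh\rho)^{-m}$ gives $-m\,(\cosh\rho)(\sinh\rho)^{-(m+1)}$, so a naive bound loses a power because $\cosh\rho$ is comparable to $\sinh\rho$, not to $1$.

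The way around this is to strengthen the inductive statement slightly so that it is self-propagating: I would prove that for $\rho \ge 1$ one has not only $|h^m_j(\rho)| \le C_j (\sinh\rho)^{-m}$ but also $|(h^m_j)'(\rho)| \le C_j' (\sinh\rho)^{-m}$ (the derivative does not decay faster, but it does not decay slower either, since $|(\sinh\rho)^{-m})'| = m\coth\rho\,(\sinh\rho)^{-m} \le m\coth(1)(\sinh\rho)^{-m}$ for $\rho\ge1$). With this strengthened hypothesis, $\left|\frac{(h^m_j)'(\rho)}{\sinh\rho}\right| \le C_j'(\sinh\rho)^{-(m+1)}$, giving the desired bound for $h^{m+1}_j$; and a further application of the product/quotient rule to the recursion, together with $|\sinh''| , |\cosh'|$ all being $\le C\sinh\rho$ and $(\sinh\rho)^{-1} \le \sinh(1)^{-1}$ for $\rho\ge1$, shows that $(h^{m+1}_j)'$ also obeys the $(\sinh\rho)^{-(m+1)}$ bound, closing the induction. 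Alternatively, and perhaps more cleanly, one can observe from the recursion that each $h^m_j$ is a finite sum of terms of the form $P(\cosh\rho)(\sinh\rho)^{-q}$ with $P$ a polynomial of degree $q - m \le q$, hence each such term is $O((\sinh\rho)^{-m})$ for large $\rho$ and differentiation preserves this class; this structural observation makes the decay bound transparent without carrying an auxiliary derivative estimate.

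The main obstacle is precisely this bookkeeping about $\cosh\rho \sim \sinh\rho$: one must be careful not to let the polynomial factors in $\cosh\rho$ erode the claimed exponential decay rate $(\sinh\rho)^{-m}$ under repeated differentiation. Once the inductive hypothesis is set up to include control on the derivatives of the coefficients (or, equivalently, to record the structural form $P(\cosh\rho)(\sinh\rho)^{-q}$ with a degree constraint), the rest is a routine verification using the product rule and the elementary bounds $\cosh\rho \le \sinh\rho\coth(1) \le 2\sinh\rho$ and $(\sinh\rho)^{-1} \le (\sinh 1)^{-1}$ valid for $\rho \ge 1$.
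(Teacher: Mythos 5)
Your proposal is correct and, in its ``cleaner'' variant, is exactly the paper's argument: the paper's proof consists precisely of the inductive observation that each $h^m_j$ is a finite linear combination of terms of the form $(\cosh\rho)^{j}/(\sinh\rho)^{j+m}$, from which the stated bound for $\rho\ge 1$ follows since $\cosh\rho\le \coth(1)\,\sinh\rho$ there. One caveat on your primary variant: strengthening the induction hypothesis only by $|(h^m_j)'|\le C(\sinh\rho)^{-m}$ does not literally close, because bounding $(h^{m+1}_j)'$ through the recursion $h^{m+1}_j=\frac{(h^m_j)'}{\sinh\rho}+\frac{h^m_{j-1}}{\sinh\rho}$ requires control of $(h^m_j)''$, so one would need derivative bounds of all orders; equivalently, and more simply, keep the structural form $P(\cosh\rho)(\sinh\rho)^{-q}$ with $\deg P\le q-m$, which is stable under differentiation and under division by $\sinh\rho$ (raising $m$ by one), and this is the version you should retain -- it is the paper's proof.
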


\begin{proof}
By induction in $m$ we see that
$h^m_n(\rho)$ is a finite linear combination of terms of the form
\[
\frac{(\cosh\rho)^j}{(\sinh\rho)^{j+m}}.
\]
So, the result follows.
\end{proof}

\begin{coro}\label{cor-bounds}
For every $m=1,2,\ldots$ we have the bound (for all $\lambda\in\mathbb R$)
\begin{equation}\label{bound1}
\left|\lp \frac{\partial_\rho}{\sinh \rho}\rp^m \cos(\lambda\rho)\right|\leq \begin{cases}
C \displaystyle\frac\rho{(\sinh\rho)^{m}}\sum_{n=2}^m\lambda^n,\quad &\rho\geq 1,\\
C \displaystyle\sum_{n=2}^{2m}\lambda^n,\quad  \quad &0<\rho\leq 1.
\end{cases}
\end{equation}
\end{coro}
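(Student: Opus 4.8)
The plan is to combine the two descriptions of the derivative $\lp\frac{\partial_\rho}{\sinh\rho}\rp^m\cos(\lambda\rho)$ provided by Lemma~\ref{lemma-derivative1}: use description~(i) to get the bound for small $\rho$, and a mixture of Lemma~\ref{lemma-derivative1}(ii) and Lemma~\ref{lemma-derivative} to get the bound for $\rho\ge1$. Concretely, by Lemma~\ref{lemma-derivative1}(i) the expression is a finite linear combination of terms $f(\rho^2)\lambda^n g_1((\lambda\rho)^2)$ with $2\le n\le 2m$, where $f$ is analytic on $|z|\le\pi^2$ and $g_1$ is entire. For $0<\rho\le1$ we have $\rho^2\le1\le\pi^2$, so $|f(\rho^2)|$ is bounded by a constant; and $|\lambda\rho|\le|\lambda|$, so if $|\lambda|\le1$ then $|g_1((\lambda\rho)^2)|$ is bounded, while if $|\lambda|\ge1$ one uses instead description~(ii), whose terms $\lambda^n\frac{\sin(\lambda\rho)}{(\lambda\rho)^l}$ and $\lambda^n\frac{\cos(\lambda\rho)}{(\lambda\rho)^l}$ satisfy $\big|\frac{\sin(\lambda\rho)}{(\lambda\rho)^l}\big|\le C$ and $\big|\frac{\cos(\lambda\rho)}{(\lambda\rho)^l}\big|\le C$ for $l\ge1$ because $|\lambda\rho|\ge$ (recall $|\lambda|\ge1$, $\rho\le 1$ — here one should be slightly careful: $|\lambda\rho|$ need not be large, but $\big|\frac{\sin t}{t^l}\big|$ is bounded on compact sets and behaves like $t^{1-l}\le 1$ for large $t$ when $l\ge 1$, and is bounded near $0$ too since $\sin t/t\to1$; for $l=0$ it is trivially bounded). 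In all cases the total contribution is $\le C\sum_{n=2}^{2m}\lambda^n$, as claimed.

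For $\rho\ge1$, I would invoke Lemma~\ref{lemma-derivative} with $u(\rho)=\cos(\lambda\rho)$: we get $\lp\frac{\partial_\rho}{\sinh\rho}\rp^m\cos(\lambda\rho)=\sum_{j=1}^m h^m_j(\rho)\,\partial_\rho^{(j)}\cos(\lambda\rho)$ with $|h^m_j(\rho)|\le C_j(\sinh\rho)^{-m}$, and $|\partial_\rho^{(j)}\cos(\lambda\rho)|\le|\lambda|^j$. This gives the crude bound $C(\sinh\rho)^{-m}\sum_{j=1}^m|\lambda|^j$, but that has a $|\lambda|^1$ term and no factor $\rho$, so it is not yet what we want. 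To recover the extra $\rho$ and to eliminate the linear power of $\lambda$, I would instead go back to description~(ii) of Lemma~\ref{lemma-derivative1}: each term is $f(\rho^2)$ times $\lambda^n\frac{\sin(\lambda\rho)}{(\lambda\rho)^l}$ or $\lambda^n\frac{\cos(\lambda\rho)}{(\lambda\rho)^l}$ with $2\le n\le 2m$. The point is that the analytic-in-$\rho^2$ prefactors arising in the induction, when combined with the $1/\sinh\rho$ factors, actually produce the decay $(\sinh\rho)^{-m}$ times at worst one power of $\rho$ (this is exactly the structure $\frac{(\cosh\rho)^k}{(\sinh\rho)^{k+m}}$ identified in the proof of Lemma~\ref{lemma-derivative}, which for $\rho\ge1$ is $\le C(\sinh\rho)^{-m}$, together with a single surviving $\rho$ coming from the very first differentiation $\frac{\partial_\rho}{\sinh\rho}\cos(\lambda\rho)=\lambda^2\frac{\rho}{\sinh\rho}\cdot\frac{-\sin(\lambda\rho)}{\lambda\rho}$). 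Meanwhile $\big|\frac{\sin(\lambda\rho)}{(\lambda\rho)^l}\big|$ and $\big|\frac{\cos(\lambda\rho)}{(\lambda\rho)^l}\big|$ are bounded by an absolute constant for all $\lambda\in\mathbb R$, $\rho\ge1$ and all $l\ge0$. Hence each term is bounded by $C\frac{\rho}{(\sinh\rho)^m}\lambda^n$ with $2\le n\le 2m$; but for $\rho\ge1$ and $n\ge2$, $\lambda^n\le C\sum_{n=2}^{m}\lambda^n$ after absorbing the higher powers — more precisely one keeps the stated sum $\sum_{n=2}^m\lambda^n$ by noting that the $\lambda$-powers beyond $m$ are controlled because for $\rho \ge 1$ one can trade a factor $(\sinh\rho)^{-1}$ (there are $m$ of them) for nothing, and the claim as written in \eqref{bound1} only asks for $\sum_{n=2}^m\lambda^n$, so one must check that description~(ii) never produces $n>m$ when $\rho\ge 1$; if it does, I would re-examine whether the statement intends $\sum_{n=2}^{2m}$ in both cases or use the identity $\partial_\rho^{(j)}$ bound route which caps $j$ at $m$.

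The main obstacle is precisely this bookkeeping of the power of $\lambda$ in the regime $\rho\ge1$: the two available descriptions give either powers up to $2m$ (description~ii) or, via Lemma~\ref{lemma-derivative}, powers up to $m$ but with an unwanted $\lambda^1$ term and no $\rho$ factor. Reconciling these to land exactly on $C\frac{\rho}{(\sinh\rho)^m}\sum_{n=2}^m\lambda^n$ requires tracking, through the induction, that whenever a factor $\lambda^2$ is generated a factor $\frac{\rho}{\sinh\rho}$ is generated alongside it, so that high powers of $\lambda$ always come paired with enough powers of $(\sinh\rho)^{-1}\le e^{-\rho}$ to be reabsorbed (for $\rho\ge1$) into lower powers of $\lambda$ times a single $\rho$. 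I expect the cleanest write-up is: prove the $0<\rho\le1$ case directly from Lemma~\ref{lemma-derivative1}(i)–(ii) as above, and prove the $\rho\ge1$ case by combining Lemma~\ref{lemma-derivative} (for the $(\sinh\rho)^{-m}$ decay and the cap $j\le m$) with the elementary estimate $|\partial_\rho^{(j)}\cos(\lambda\rho)|=|\lambda|^j\le C_m(\lambda^2+\cdots+\lambda^{m})$ valid once we also note the single extra $\rho$ can be extracted from one of the $h^m_j$ factors since each is $\frac{(\cosh\rho)^k}{(\sinh\rho)^{k+m}}\le C\frac{\rho}{(\sinh\rho)^m}$ trivially for $\rho\ge1$ (indeed $\frac{(\cosh\rho)^k}{(\sinh\rho)^k}\le C$ and $1\le\rho$). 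Everything else is routine; I would present it in two short paragraphs, one per regime, citing Lemmas~\ref{lemma-derivative1} and \ref{lemma-derivative}.
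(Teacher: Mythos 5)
Your overall architecture is the same as the paper's (small $\rho$ via Lemma \ref{lemma-derivative1}, large $\rho$ via Lemma \ref{lemma-derivative}), but the step you settle on for $\rho\ge1$ fails. The ``elementary estimate'' $|\partial_\rho^{(j)}\cos(\lambda\rho)|=|\lambda|^j\le C_m(\lambda^2+\cdots+\lambda^m)$ is false for $j=1$ when $|\lambda|<1$: no constant makes $|\lambda|\le C_m\sum_{n=2}^m|\lambda|^n$ as $\lambda\to0$. Since \eqref{bound1} is asserted for all $\lambda\in\R$, and its entire purpose downstream (the $t^{-3/2}$ rates in Proposition \ref{prop-heat} and Theorem \ref{theorem-hyperbolic}) is that the lowest power of $\lambda$ is $2$ precisely in the small-$\lambda$ regime, this is the one point that must be handled correctly. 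The paper's fix --- which you quote in passing and then discard --- is to keep the $j=1$ term and write $\partial_\rho\cos(\lambda\rho)=-\lambda\sin(\lambda\rho)=-\lambda^2\rho\,\frac{\sin(\lambda\rho)}{\lambda\rho}$, so that $|h^m_1(\rho)\,\partial_\rho\cos(\lambda\rho)|\le C\lambda^2\rho\,(\sinh\rho)^{-m}$. The factor $\rho$ in \eqref{bound1} is exactly the price of trading $|\lambda|$ for $\lambda^2$; it is not a decorative factor to be extracted from $h^m_j$ via $1\le\rho$. With this, the terms $j\ge2$ are bounded by $C(\sinh\rho)^{-m}|\lambda|^j$ directly and the large-$\rho$ bound follows; this also removes your worry about powers up to $2m$, which only arise if one insists on description \emph{ii} of Lemma \ref{lemma-derivative1} in this regime.

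There is a second, smaller gap in the regime $0<\rho\le1$: you split according to whether $|\lambda|\le1$ or $|\lambda|\ge1$, and for $|\lambda|\ge1$ you claim the terms $\lambda^n\frac{\sin(\lambda\rho)}{(\lambda\rho)^l}$, $\lambda^n\frac{\cos(\lambda\rho)}{(\lambda\rho)^l}$ of description \emph{ii} are uniformly bounded, asserting boundedness near $\lambda\rho=0$ ``since $\sin t/t\to1$''. That is only true for $l\le1$; for $l\ge2$ (and $l\ge1$ in the cosine case) these quotients blow up as $\lambda\rho\to0$, and $|\lambda|\ge1$ does not prevent $\lambda\rho$ from being small. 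The split must be made according to the size of $\lambda\rho$, as the paper does: use description \emph{i} when $|\lambda\rho|\le1$ (the entire function $g_1$ is bounded on $|z|\le1$) and description \emph{ii} when $|\lambda\rho|\ge1$ (so $|\lambda\rho|^{-l}\le1$). With these two repairs your argument coincides with the paper's proof.
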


\begin{proof}
The estimate near $\rho=0$ follows from Lemma \ref{lemma-derivative1}, using expansions \emph{i} or \emph{ii.} according to the value of $\lambda\rho$. On the other hand, for the bound for large $\rho$ we apply Lemma \ref{lemma-derivative} to get
\[
\left|\lp \frac{\partial_\rho}{\sinh \rho}\rp^m \cos(\lambda\rho)\right|\leq Ch_1(\rho)\big|\partial_{\rho}\cos \lambda\rho\big|+\sum_{n=2}^m h_n(\rho)\lambda^{n}
\]
with $h_n(\rho)\le C(\sinh\rho)^{-m}$ for every $n\in\N$. Moreover,
\[
\partial_\rho \cos(\lambda\rho)=-\lambda\sin\lambda\rho=-\lambda^2\rho\,\frac{\sin\lambda\rho}{\lambda\rho}
\]
and the results follows.
\end{proof}

\begin{rema}\label{remark-coro-1.1} Observe that we also have
\[
\left|\lp \frac{\partial_\rho}{\sinh \rho}\rp^m \cos(\lambda\rho)\right|\leq \frac 1{(\sinh\rho)^m}\sum_{n=1}^m \lambda^{n}.
\]
\end{rema}

\begin{lemm}\label{lemma-integral} Let $N\geq 2$ be an even integer.
Assume that the function $h$ satisfies
\begin{equation}\label{equation41}
|h(s)|\leq
\begin{cases}
\sinh^{-\frac N 2} s,\quad &s\geq 1,\\
1, \quad &0<s\leq 1.
\end{cases}
\end{equation}
Then the integral
\begin{equation*}
I(\rho):=\int_\rho^\infty \frac{\sinh s}{\sqrt{\cosh s-\cosh \rho}} \,h(s)\,ds,
\end{equation*}
is bounded by
\begin{equation*}
|I(\rho)|\leq C
\begin{cases}
\sinh^{- \frac{N-1} 2}\rho,\quad &\rho\geq 1,\\
1, \quad &0<\rho\leq 1.
\end{cases}
\end{equation*}

If instead
\begin{equation}\label{cota-h}
|h(s)|\leq
\begin{cases}
s\sinh^{-\frac N 2} s,\quad &s\geq 1,\\
1, \quad &0<s\leq 1.
\end{cases}
\end{equation}
there holds that
\begin{equation*}
|I(\rho)|\leq C
\begin{cases}
\rho\sinh^{- \frac{N-1} 2}\rho,\quad &\rho\geq 1,\\
1, \quad &0<\rho\leq 1.
\end{cases}
\end{equation*}
\end{lemm}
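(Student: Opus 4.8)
The plan is to estimate $I(\rho)$ by splitting the domain of integration according to how $\cosh s - \cosh\rho$ compares with a benchmark, and by treating the regime $\rho\le 1$ and $\rho\ge 1$ separately. For $0<\rho\le 1$ the integrand has an integrable singularity at $s=\rho$ (a square-root singularity), and since $h$ is bounded by $1$ on $(0,1)$ and decays like $\sinh^{-N/2}s$ (hence integrably, as $N\ge 2$) for $s\ge 1$, the bound $|I(\rho)|\le C$ follows from a direct estimate: write $\cosh s - \cosh\rho = \int_\rho^s \sinh t\,dt \ge c(s-\rho)$ near $s=\rho$, so $\int_\rho^{\rho+1}\frac{\sinh s}{\sqrt{s-\rho}}\,ds \le C$, and the tail $\int_{\rho+1}^\infty \sinh s\cdot \sinh^{-N/2}s\,ds$ converges. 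This part is routine and the same argument serves both hypotheses \eqref{equation41} and \eqref{cota-h} since for $\rho\le 1$ the right-hand sides coincide.

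The substantive case is $\rho\ge 1$. Here I would change variables and split $\int_\rho^\infty = \int_\rho^{2\rho} + \int_{2\rho}^\infty$ (or, more efficiently, split at $s=\rho+1$). On the near piece, use $\cosh s - \cosh\rho \ge \sinh\rho\,(s-\rho)$ together with $\sinh s \le C\,e^{s-\rho}\sinh\rho$ and the bound $|h(s)|\le \sinh^{-N/2}s \le C\, e^{-N(s-\rho)/2}\sinh^{-N/2}\rho$; integrating $\frac{e^{s-\rho}\sinh\rho}{\sqrt{\sinh\rho}\sqrt{s-\rho}}\,e^{-N(s-\rho)/2}\sinh^{-N/2}\rho$ over $s\in(\rho,\rho+1)$ (or all the way to $\infty$, since the exponential $e^{(1-N/2)(s-\rho)}$ decays for $N\ge 3$ and is controlled for $N=2$ after a further split) gives a constant times $\sinh^{1/2-N/2}\rho\cdot\sinh\rho\cdot\sinh^{-1/2}\rho$, wait — more carefully one gets the factor $(\sinh\rho)^{1/2}$ from $\sinh s/\sqrt{\cosh s-\cosh\rho}\sim (\sinh\rho)^{1/2}(s-\rho)^{-1/2}$, times $(\sinh\rho)^{-N/2}$ from $h$, yielding $(\sinh\rho)^{(1-N)/2}$, exactly the claimed power. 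For the far piece $s\ge\rho+1$ one has $\cosh s-\cosh\rho\ge \tfrac12\cosh s\ge c\,e^{s}$, so $\frac{\sinh s}{\sqrt{\cosh s-\cosh\rho}}\le C e^{s/2}$, and $\int_{\rho+1}^\infty e^{s/2}\sinh^{-N/2}s\,ds \le C e^{(1-N)\rho/2}\le C(\sinh\rho)^{(1-N)/2}$ since $N\ge 2$. Combining the two pieces gives $|I(\rho)|\le C(\sinh\rho)^{-(N-1)/2}$.

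For the second hypothesis \eqref{cota-h}, the only change is the extra factor $s$ in the bound on $h$ for $s\ge 1$. In the near-piece estimate this factor is comparable to $\rho$ throughout $s\in(\rho,\rho+1)$ (up to an additive constant absorbed into the already-decaying exponential), so it pulls out as an overall factor $\rho$, giving $\rho(\sinh\rho)^{-(N-1)/2}$; in the far piece, the integral $\int_{\rho+1}^\infty s\,e^{s/2}\sinh^{-N/2}s\,ds\le C\rho\,e^{(1-N)\rho/2}$ again by the exponential decay absorbing the polynomial weight (for $N\ge 2$ one has $(1-N)/2\le -1/2<0$). This yields the stated bound $|I(\rho)|\le C\rho(\sinh\rho)^{-(N-1)/2}$ for $\rho\ge 1$, and the case $\rho\le 1$ is as before.

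The main obstacle is the borderline dimension $N=2$, where the exponential factor $e^{(1-N/2)(s-\rho)}=1$ in the near-piece integrand does not decay, so one cannot integrate all the way to $+\infty$ in one shot; the fix is to genuinely split at $s=\rho+1$ (near piece integrated against $(s-\rho)^{-1/2}$, which is integrable on a bounded interval) and handle $s\ge\rho+1$ via the far-piece estimate, which works for every $N\ge 2$. Everything else is a matter of bookkeeping with the elementary inequalities $\cosh s-\cosh\rho\ge\sinh\rho\,(s-\rho)$, $\cosh s-\cosh\rho\ge\tfrac12\cosh s$ for $s\ge\rho+1$, and $\sinh s\le C\,e^{s-\rho}\sinh\rho$.
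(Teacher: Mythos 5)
Your handling of the substantive regime $\rho\ge 1$ is correct and close to the paper's own argument: the paper also splits the integral at a point a fixed distance beyond $\rho$ (it uses $s=\rho+2$), bounds the near piece via $\cosh s-\cosh\rho\ge(s-\rho)\sinh\rho$, and controls the far piece --- there by the Taylor-type bound $\cosh s-\cosh\rho\ge\sinh\rho\,\frac{(s-\rho)^n}{n!}$ with a large odd $n$, whereas you use $\cosh s-\cosh\rho\ge\tfrac12\cosh s$ for $s\ge\rho+1$; both routes give the power $(\sinh\rho)^{-\frac{N-1}2}$, and the extra factor $\rho$ under hypothesis \eqref{cota-h} is extracted in the same way. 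The problems are in the regime $0<\rho\le 1$, which you dismiss as routine but where two of your steps fail as written.

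First, the inequality $\cosh s-\cosh\rho=\int_\rho^s\sinh t\,dt\ge c\,(s-\rho)$ cannot hold with a constant independent of $\rho\in(0,1]$: as $s\downarrow\rho$ the ratio $(\cosh s-\cosh\rho)/(s-\rho)$ tends to $\sinh\rho$, which vanishes as $\rho\to0$, so the constant you get is of order $\sinh\rho$ and the resulting bound on the near piece blows up for small $\rho$. The correct elementary input is the quadratic lower bound $\cosh s-\cosh\rho\ge\tfrac12(s^2-\rho^2)$, which gives $\int_\rho^{\rho+1}\frac{s}{\sqrt{s^2-\rho^2}}\,ds=\sqrt{2\rho+1}\le C$; this is exactly what the paper does. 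Second, in the tail for $\rho\le1$ you discard the denominator and claim that $\int_{\rho+1}^\infty\sinh s\cdot\sinh^{-\frac N2}s\,ds$ converges; in the borderline case $N=2$ the integrand is identically $1$ (and equals $s$ under \eqref{cota-h}), so this integral diverges. You must retain the factor $\sqrt{\cosh s-\cosh\rho}\ge c\,(\cosh s)^{1/2}$, valid for $s\ge\rho+1$ and $\rho\le1$, which turns the integrand into $C(\sinh s)^{-\frac{N-1}2}$, respectively $Cs(\sinh s)^{-\frac{N-1}2}$, integrable for every even $N\ge2$ --- the very estimate you already use in the far piece for $\rho\ge1$. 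With these two repairs your argument goes through and coincides in substance with the paper's proof.
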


\begin{proof}
 We split the region of integration so that
$$I=\int_\rho^{\rho+2} \frac{\sinh s}{\sqrt{\cosh s-\cosh \rho}} \,h(s)\,ds+\int_{\rho+2}^{\infty} \frac{\sinh s}{\sqrt{\cosh s-\cosh \rho}} \,h(s)\,ds:=I_1+I_2.$$

First assume that $0\leq \rho\leq 1$.
Note that, by Taylor's expansion at the origin,
$$\cosh s-\cosh \rho=\frac{s^2-\rho^2}{2!}+\frac{s^4-\rho^4}{4!}+\ldots,$$
so we can bound  $I_1$ by
$$
\begin{aligned}|I_1|&\leq C\int_\rho^{\rho+2} \frac{\sinh s}{\sqrt{\cosh s-\cosh \rho}} \,ds\leq C\int_\rho^{\rho+2}\frac{s}{\sqrt{s^2-\rho^2}}\,ds,\\
&=C\sqrt{(\rho+2)^2-\rho^2}\le 3C
\end{aligned}
$$

Next, we estimate $I_2$. First assume  $h$ satisfies \eqref{equation41} then, since $0<\rho\le 1$,
\[
\begin{aligned}
|I_2(\rho)|&\le \int_{\rho+2}^\infty\frac{(\sinh s)^{1-\frac N2}}{\sqrt{\cosh s-\cosh\rho}}\,ds\le C\int_2^\infty(\sinh s)^{-\frac{N-1}2}\,ds=C.
\end{aligned}
\]

Now, assume that $\rho\geq 1$. First observe that
\[
\cosh s-\cosh\rho\ge(s-\rho)\sinh\rho.
\]
Therefore,
\[
\begin{aligned}
|I_1(\rho)|&\le \frac1{(\sinh\rho)^{1/2}}\int_\rho^{\rho+2}\frac{(\sinh s)^{1-\frac N2}}{\sqrt{s-\rho}}\,ds\le \frac1{(\sinh\rho)^{\frac{N-1}2}}\int_0^2\tau^{-1/2}\,d\tau=C\frac1{(\sinh\rho)^{\frac{N-1}2}}.
\end{aligned}
\]
In order to bound $I_2(\rho)$, we observe from Taylor's expansion at $s=\rho$ that for $n$ odd and $s>\rho$,
\[
\cosh s-\cosh\rho\ge (\sinh\rho)\frac{(s-\rho)^n}{n!}
\]
so that,
\[
|I_2(\rho)|\le \frac1{(\sinh \rho)^{1/2}}\int_{\rho+2}^\infty\frac{(\sinh s)^{1-\frac N2}}{(s-\rho)^\frac n2}\,ds.
\]
By taking $n>2N$ we get,
\[
|I_2(\rho)|\le \frac C{(\sinh \rho)^{\frac{N-1}2}}.
\]

\medskip

Now, if $h$ satisfies \eqref{cota-h}, we proceed similarly. By using that $s\le \rho+2\le 3$ when $0<\rho\le 1$, the bound of $I_1$ follows in that case. In order to bound $I_2(\rho)$ for $0<\rho\le 1$ we proceed as before and we get,
\[
|I_2(\rho)|\le C\int_2^\infty s(\sinh s)^{-\frac{N-1}2}\,ds=C.
\]

Finally, for $\rho>1$, since $\frac s{s-\rho}\le \frac{\rho+2}2$ for $s\ge \rho+2$,
\[\begin{aligned}
|I_2(\rho)|&\le \frac1{(\sinh \rho)^{1/2}}\int_{\rho+2}^\infty\frac{s(\sinh s)^{1-\frac N2}}{(s-\rho)^{\frac n2}}\,ds\\
&\le C\frac\rho{(\sinh \rho)^{1/2}}
\int_{\rho+2}^\infty\frac{(\sinh s)^{1-\frac N2}}{(s-\rho)^{\frac n2-1}}\,ds.
\end{aligned}
\]
And the result follows by taking $n>2(N+1)$.
\end{proof}

\subsection{The heat equation on hyperbolic space}\label{section-heat}

The explicit expression for the heat kernel on hyperbolic space is well known in the literature  (see, for instance, \cite{Grigoryan-Noguchi} and the references therein, or \cite{Davies-Mandouvalos,Banica:Schrodinger}). However, we provide a direct proof using the Fourier transform in hyperbolic space, since some of the ingredients will be used in the proof of the main theorem.

\begin{prop}\label{prop-heat}
The solution of
\begin{equation}\label{heat}
 \begin{cases}
  v_t=b\Delta_{\mathbb H^N} v+\left(a-1+b\frac{(N-1)^2}{4} \right)v\qquad&\mbox{in }\mathbb H^N\times(0,\infty),\\
v(x,0)=u_0(x)\qquad&\mbox{in }\mathbb H^N,
 \end{cases}
\end{equation}
may be written explicitly  as
\begin{equation*}
v(x,t)=e^{-(1-a)t}\int_{\hh^N}  u_0(x')K_0(s_{xx'},t)\,d\mu_{x'},
\end{equation*}
where, for $N\geq 3$ odd,
\begin{equation}
\label{K0odd}
    K_0(\rho,t)=C_N\frac{1}{\sqrt{bt}}\left(\frac{\partial_\rho}{\sinh \rho}\right)^\frac{N-1}{2}
e^{-\frac{\rho^2}{4bt}},\end{equation}
and, for $N\geq 2$ even,
\begin{equation*}K_0(\rho,t)=C_N\frac{1}{(bt)^{3/2}} \lp\frac{\partial_\rho}{\sinh \rho}\rp^{\frac{N-2}{2}}\int_{\rho}^\infty \frac{s\, e^{-\frac{s^2}{4bt} }}{\sqrt{\cosh s-\cosh \rho}} \,ds.\end{equation*}
Moreover, these kernels satisfy the following estimates when $t\to +\infty$:
\begin{equation}\label{estimates-1}
|K_0(\rho,t)|\leq C\frac{1}{t^{3/2}}
 \frac\rho{(\sinh \rho)^{\frac{N-1}{2}}}, \quad \rho\geq 1,
 \end{equation}
 and,
 \begin{equation}\label{estimates-2}
 |K_0(\rho,t)|\leq C\frac{1}{t^{3/2}},\quad 0<\rho\leq 1,
 \end{equation}

\end{prop}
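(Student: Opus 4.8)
The plan is to diagonalise the problem with the Fourier transform on $\H^N$. Applying it to \eqref{heat} and using $\widehat{\Delta_{\H^N}u}=-\lp\lambda^2+\tfrac{(N-1)^2}{4}\rp\hat u$, the equation becomes the scalar ODE $\partial_t\hat v=(a-1-b\lambda^2)\,\hat v$ with $\hat v(\lambda,\omega,0)=\hat u_0(\lambda,\omega)$, so $\hat v(\lambda,\omega,t)=e^{-(1-a)t}e^{-b\lambda^2 t}\hat u_0(\lambda,\omega)$. Hence $v(\cdot,t)$ is $u_0$ acted on by the radial Fourier multiplier $e^{-(1-a)t}e^{-b\lambda^2t}$, and by the multiplier/inversion formula \eqref{inversion1}--\eqref{k} (or directly by \eqref{product-convolution}) this reads
\[
v(x,t)=e^{-(1-a)t}\int_{\H^N}u_0(x')\,K_0(s_{xx'},t)\,d\mu_{x'},\qquad K_0(\rho,t)=\int_{-\infty}^{\infty}e^{-b\lambda^2 t}\,k_\lambda(\rho)\,d\lambda .
\]
I would first establish this for radial $u_0\in C_0^\infty(\H^N)$, whose spherical transform is entire and rapidly decreasing, and then pass to general $u_0$; the $\lambda$-integral and the interchange of $\rho$-derivatives with it are legitimate because of the polynomial growth in $\lambda$ of $k_\lambda$ from Corollary \ref{cor-bounds} against the Gaussian weight.

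Next I would evaluate $K_0$ using Lemma \ref{lemma-k-lambda}. For $N$ odd, $k_\lambda(\rho)=c_N\lp\frac{\partial_\rho}{\sinh\rho}\rp^{\frac{N-1}{2}}\cos(\lambda\rho)$; pulling the $\rho$-derivatives outside the $\lambda$-integral and using $\int_{-\infty}^\infty e^{-b\lambda^2 t}\cos(\lambda\rho)\,d\lambda=\sqrt{\pi/(bt)}\,e^{-\rho^2/(4bt)}$ yields \eqref{K0odd}. For $N$ even the same substitution gives $K_0(\rho,t)=c_N\sqrt{\pi/(bt)}\int_\rho^\infty\frac{\sinh s}{\sqrt{\cosh s-\cosh\rho}}\lp\frac{\partial_s}{\sinh s}\rp^{\frac N2}e^{-s^2/(4bt)}\,ds$, and to bring this to the stated form I would use the commutation identity
\[
\frac{\partial_\rho}{\sinh\rho}\int_\rho^\infty\frac{\sinh s}{\sqrt{\cosh s-\cosh\rho}}\,\phi(s)\,ds=\int_\rho^\infty\frac{\sinh s}{\sqrt{\cosh s-\cosh\rho}}\,\frac{\partial_s}{\sinh s}\phi(s)\,ds ,
\]
which after the change of variables $u=\cosh s$, $\zeta=\cosh\rho$ is the standard rule for a half-order primitive; applying it $\tfrac{N-2}{2}$ times together with $\frac{\partial_s}{\sinh s}e^{-s^2/(4bt)}=-\frac{s}{2bt\sinh s}e^{-s^2/(4bt)}$ produces the claimed expression.

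For the estimates \eqref{estimates-1}--\eqref{estimates-2} as $t\to\infty$ the idea is careful bookkeeping of powers of $t$. When $N$ is odd I would expand $\lp\frac{\partial_\rho}{\sinh\rho}\rp^{\frac{N-1}{2}}e^{-\rho^2/(4bt)}$ through Lemma \ref{lemma-derivative} for $\rho\ge1$ (coefficients of size $(\sinh\rho)^{-(N-1)/2}$) and through its analyticity in $\rho^2$ for $0<\rho\le1$. The decisive facts are that every derivative $\partial_\rho^{(j)}e^{-\rho^2/(4bt)}$ with $j\ge1$ carries at least one factor $(bt)^{-1}$, while $\rho^{k}e^{-\rho^2/(4bt)}\le C(bt)^{k/2}$ for $k\ge1$ and $e^{-\rho^2/(4bt)}\le1\le\rho$ for $\rho\ge1$; combining these turns the prefactor $(bt)^{-1/2}$ into $(bt)^{-3/2}$ with exactly the stated $\rho$-weight (the leading $j=1$ term $\tfrac{\rho}{2bt}e^{-\rho^2/(4bt)}$ already gives the sharp rate). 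When $N$ is even I would run the two identities of the previous paragraph backwards to write $K_0(\rho,t)=\frac{C_N}{\sqrt{bt}}\int_\rho^\infty\frac{\sinh s}{\sqrt{\cosh s-\cosh\rho}}\,h(s)\,ds$ with $h(s)=\lp\frac{\partial_s}{\sinh s}\rp^{\frac N2}e^{-s^2/(4bt)}$, and by the same accounting show that $|h(s)|\le\frac{C}{bt}$ times the right-hand side of \eqref{cota-h}; Lemma \ref{lemma-integral} then yields \eqref{estimates-1}--\eqref{estimates-2} at once.

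The steps I expect to require the most care are the justification of the multiplier representation and of differentiating under the $\lambda$-integral — handled by first restricting to radial $u_0\in C_0^\infty(\H^N)$, using the rapid decay of $\hat u_0$ and Corollary \ref{cor-bounds} — and the final accounting of $t$-powers, which is precisely what fixes the large-time rate at $t^{-3/2}$ rather than $t^{-1/2}$; the commutation identity itself, once translated via $u=\cosh s$ into a statement about the Riemann--Liouville half-integral, is routine.
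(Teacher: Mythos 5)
Your proposal is correct, and its skeleton — Fourier transform, the multiplier/inversion representation giving $K_0(\rho,t)=\int_{-\infty}^{\infty}e^{-b\lambda^2t}k_\lambda(\rho)\,d\lambda$, Lemma \ref{lemma-k-lambda} for $k_\lambda$, and the explicit Gaussian integral in the odd case — is exactly the paper's. Where you diverge is in two places, both to your credit. First, you actually derive the stated even-dimensional closed form by commuting $\frac{\partial_\rho}{\sinh\rho}$ with the Abel-type integral (the Riemann--Liouville half-integral rule after $u=\cosh s$) and using $\frac{\partial_s}{\sinh s}e^{-s^2/(4bt)}=-\frac{s}{2bt\sinh s}e^{-s^2/(4bt)}$; the paper's proof never produces that formula, passing directly from the $\lambda$-integral to the estimates. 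Second, for the large-time bounds you work on the physical side, expanding $\bigl(\frac{\partial_\rho}{\sinh\rho}\bigr)^{\frac{N-1}{2}}e^{-\rho^2/(4bt)}$ via Lemma \ref{lemma-derivative} (and the analyticity-in-$\rho^2$ structure near the origin) and tracking that every $\rho$-derivative of the Gaussian contributes at least one factor $(bt)^{-1}$ while stray powers of $\rho$ cost only $(bt)^{1/2}$ each; the paper instead stays on the Fourier side, bounding the integrand by Lemma \ref{lemma-derivative1}/Corollary \ref{cor-bounds} with powers $\lambda^n$, $n\ge 2$, and using $\int\lambda^n e^{-b\lambda^2t}\,d\lambda\le Ct^{-(n+1)/2}\le Ct^{-3/2}$. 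Your even-case reduction to Lemma \ref{lemma-integral} with $|h(s)|\le \frac{C}{bt}$ times the weight in \eqref{cota-h} is the right bookkeeping and matches the paper's use of that lemma. In short: your route buys the explicit even-$N$ kernel (which the statement asserts but the written proof glosses over) at the price of the extra commutation identity and Fubini justifications; the paper's route is slightly leaner for the estimates because it reuses the $\lambda$-side bounds uniformly in both parities. Both yield \eqref{estimates-1}--\eqref{estimates-2} with the same sharp rate $t^{-3/2}$.
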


\begin{proof}
Taking Fourier transform, the solution of \eqref{heat} may be written as
\begin{equation*}
\hat v(\lambda,\omega,t)=\hat u_0(\lambda,\omega)e^{(a-1-b\lambda^2) t}.
\end{equation*}
Therefore, by formula \eqref{inversion1} we can express this solution as
\begin{equation*}
v(x,t)=e^{(a-1)t}\int_{\hh^N}  u_0(x')K_0(s_{xx'},t)\,d\mu_{x'},
\end{equation*}
where
$$K_0(\rho,t)=\int_{-\infty}^{\infty} e^{-b\lambda^2 t} k_\lambda(\rho) \; d\lambda.$$

More precisely, for $N$ odd, using \eqref{Kodd} we can write
\begin{equation}\label{equation50}
K_0(\rho,t)=c\left(\frac{\partial_\rho}{\sinh \rho}\right)^\frac{N-1}{2}\int_{-\infty}^{\infty} e^{-b\lambda^2 t} (\cos\lambda \rho) \; d\lambda.
\end{equation}
But then we calculate
\begin{equation*}
\begin{split}
\int_{-\infty}^{\infty} e^{-b\lambda^2 t} (\cos\lambda \rho) \; d\lambda&=\int_{-\infty}^\infty e^{-b\lambda^2t+i\lambda \rho} \; d\lambda
=\int_{-\infty}^\infty e^{-\lp \lambda\sqrt{bt}-\frac{i\rho}{2\sqrt{bt}}\rp^2} \; d\lambda\\
&=\frac{1}{\sqrt{bt}}\,e^{-\frac{\rho^2}{4bt}}\int_{-\infty}^\infty
e^{-\lp \eta-\frac{i\rho}{2\sqrt{bt}}\rp^2} \; d\eta
=\frac{2\sqrt{\pi}}{\sqrt{bt}}\,e^{-\frac{\rho^2}{4bt}},
\end{split}\end{equation*}
which shows \eqref{K0odd}.

Now we prove the estimates \eqref{estimates-1} and \eqref{estimates-2} in the case that $N$ is odd. First,
we may use Lemma \ref{lemma-derivative1} to bound \eqref{equation50} near $\rho=0$. Indeed, by Lemma \ref{lemma-derivative1}, \emph{i.} if $\lambda\rho\le 1$ and \emph{ii.} if $\lambda\rho>1$ we have, for $0<\rho\le 1$,
\[
\Big|\Big(\frac{\partial_\rho}{\sinh \rho}\Big)^{\frac{N-1}2}\cos\lambda\rho\big|\le C\sum_{n=2}^{{N-1}}\lambda^n
\]
so that, if $t\ge 1$,
\[
|K_0(\rho,t)|\le C\sum_{n=2}^{{N-1}}\int_{-\infty}^\infty \lambda^n e^{-b\lambda^2 t}\,d\lambda=C\sum_{n=2}^{{N-1}}\frac1{(bt)^{\frac{n+1}2}}\int_{-\infty}^\infty
\eta^n e^{-\eta^2}\,d\eta\le \frac C{t^{3/2}}.
\]

Now, for $\rho>1$, by Corollary 1.1,
\[
\Big|\Big(\frac{\partial_\rho}{\sinh \rho}\Big)^{\frac{N-1}2}\cos\lambda\rho\big|\le C\frac{\rho}{(\sinh \rho)^{\frac{N-1}2}}\sum_{n=2}^{\frac{N-1}2}\lambda^n
\]
and the same computation as above gives \eqref{estimates-1}.

On the other hand, for $N$ even
\begin{equation*}K_0(\rho,t)=c\int_{-\infty}^{\infty} e^{-b\lambda^2 t}\int_{\rho}^\infty \frac{\sinh s}{\sqrt{\cosh s-\cosh \rho}} \lp\frac{\partial_s}{\sinh s}\rp^{{N}}\cos \lambda s\,ds\,d\lambda.\end{equation*}

By Lemma 1.2 and Corollary 1.1,  we have,
\[
K_0(\rho,t)=c\sum_{n=2}^{{N}}I_n(\rho)\int_{-\infty}^\infty \lambda^n e^{-b\lambda^2 t}\,d\lambda
\]
where
\[
I_n(\rho)=\int_{\rho}^\infty \frac{\sinh s}{\sqrt{\cosh s-\cosh \rho}}\,h_n(s)\,ds
\]
with
\[
|h_n(s)|\le C\begin{cases}
1\quad& 0<s\le 1,\\
\frac s{(\sinh s)^{\frac N2}}\quad& s>1.
\end{cases}
\]

Hence, by Lemma 1.4 and the computations above, estimates \eqref{estimates-1} and \eqref{estimates-2} hold.
\end{proof}

\begin{rema}\label{rema-initial-decay-heat} Observe from the proof of Proposition 2.1 that for $0<t\le 1$ the power $t^{-3/2}$ in the estimates of $K_0$ is replaced by $t^{-N/2}$, but the $t^{-3/2}$ decay cannot be improved for $t$ large. In the Discussion section at the end we will provide some heuristic arguments explaining this exponent.
\end{rema}

\medskip

\subsection{Asymptotic profile for the non-local equation}

We are given  $J\in C_0^\infty(\H^N)$  a  radially symmetric kernel.
Consider the Cauchy problem
\begin{equation}\label{P5}
 \begin{cases}
  u_t=J*u-u\qquad&\mbox{in }\mathbb H^N\times(0,\infty),\\
u(x,0)=u_0(x)\qquad&\mbox{on }\mathbb H^N.
 \end{cases}
\end{equation}
Recall that the convolution is defined as
$$J*u(x)=\int_{\mathbb H^N} u(x')J(s_{xx'})\,d\mu_{x'}.$$

Note that $\hat J=\hat J(\lambda)$ is a radially symmetric function that has an entire extension to $\mathbb C$  and decays at infinity faster than any negative power of $|\lambda|$. Moreover, the asymptotic behavior  of the solution to \eqref{P5} as $t\to\infty$ is governed by the behavior of  $\hat J(\lambda)$ at the origin. In fact, we have the expansion
\begin{equation*}\hat J(\lambda)=a-b\lambda^2+\lambda^2f(\lambda),\end{equation*}
for some $f(\lambda)=O(\lambda^2)$ an even function (note that $J$ being a radially symmetric kernel has even Fourier transform). These coefficients are given by the formulas
\begin{equation*}
\begin{split}
a&:=\int_{\mathbb H^N} J(s_{xO})\Phi_0(x)\,d\mu_x,\\
b&:=-\frac{1}{2}\int_{\mathbb H^N} J(s_{xO})\left.\partial_{\lambda\lambda}\right|_{\lambda=0}\Phi_\lambda(x)\,d\mu_x.
\end{split}\end{equation*}

Observe that $b>0$. In fact,
\[
\partial_{\lambda\lambda}\Phi_{-\lambda}(\rho)\Big|_{\lambda=0}=
-c_N\int_0^\pi\big(\cosh\rho-\sinh \rho\cos\theta\big)^{-\frac{N-1}2}\log^2
(\cosh\rho-\sinh \rho\cos\theta\big)\,d\theta<0
\]
for every $\rho>0$. Hence,
\[
b=-\partial_{\lambda\lambda}\hat J(0)=\int_0^\infty J(\rho)\partial_{\lambda\lambda}\Phi_{-\lambda}(\rho)\Big|_{\lambda=0}(\sinh\rho)^{N-1}\,d\rho<0.
\]

We will compare to a translated heat equation on hyperbolic space with the same initial condition
\begin{equation}\label{Q}
 \begin{cases}
  v_t=b\Delta_{\mathbb H^N} v+\left(a-1+b\frac{(N-1)^2}{4} \right)v=0\qquad&\mbox{in }\mathbb H^N\times(0,\infty),\\
v(x,0)=u_0(x)\qquad&\mbox{in }\mathbb H^N,
 \end{cases}
\end{equation}
according to Proposition \ref{prop-heat}, we can write explicitly the solution to this equation. Indeed,
$$v(x,t)=e^{-(1-a)t}\int_{\hh^N}  u_0(x')K_0(s_{xx'},t)\,d\mu_{x'},$$
where $K_0$ is given explicitly in the proposition and it satisfies the estimates for $t$ large
\begin{equation*}
\begin{split}
&|K_0(\rho,t)|\leq C\frac{1}{t^{3/2}}
 \frac{\rho}{(\sinh \rho)^{\frac{N-1}{2}}}, \quad \rho\geq 1,\\
&|K_0(\rho,t)|\leq C\frac{1}{t^{3/2}},\quad 0<\rho\leq 1.
\end{split}
 \end{equation*}

Before we state the main result of this section let us prove the following fact:
\begin{lemm} Let $u_0$ be such that the function
\begin{equation*}
v(x):=\sup_{s_{yO}=s_{xO}}|u_0(y)|
\end{equation*}
belongs to $L^1(\H^N)$.
Then, $\hat u_0\in L^\infty(\R\times \S^{N-1})$.
\end{lemm}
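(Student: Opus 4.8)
The plan is to bound $|\hat u_0(\lambda,\omega)|$ by estimating the eigenfunctions $h_{\lambda,\omega}$ uniformly in $\lambda\in\R$ and $\omega\in\S^{N-1}$, and then integrating against the radial majorant $v$. Recall that by definition
\[
\hat u_0(\lambda,\omega)=\int_{\H^N} u_0(x)\,h_{\lambda,\omega}(x)\,d\mu_x,
\qquad h_{\lambda,\omega}(x)=[x,b(\omega)]^{i\lambda-\frac{N-1}{2}}.
\]
The key observation is that $[x,b(\omega)]$ is a positive real number for every $x\in\H^N$ and $\omega\in\S^{N-1}$ (it equals $\cosh r-\sinh r\,\langle \theta,\omega\rangle$ when $x=(\cosh r,\sinh r\,\theta)$, which is positive by Cauchy--Schwarz since $|\langle\theta,\omega\rangle|\le 1<\coth r$ for $r>0$, and equals $1$ when $x=O$). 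Hence $|h_{\lambda,\omega}(x)|=[x,b(\omega)]^{-\frac{N-1}{2}}$ because $[x,b(\omega)]^{i\lambda}$ has modulus one, so this bound is \emph{independent of $\lambda$}.

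Next I would control $[x,b(\omega)]^{-\frac{N-1}{2}}$ in terms of $r=s_{xO}$ only. From $[x,b(\omega)]=\cosh r-\sinh r\,\langle\theta,\omega\rangle$ we get the two-sided bound
\[
e^{-r}\le \cosh r-\sinh r\le [x,b(\omega)]\le \cosh r+\sinh r=e^{r},
\]
so $|h_{\lambda,\omega}(x)|\le e^{\frac{N-1}{2}r}=e^{\frac{N-1}{2}s_{xO}}$. This, however, grows in $r$, so a crude pointwise estimate combined with $v\in L^1$ is not quite enough; one needs the sharp decay. The point is that $|h_{\lambda,\omega}(x)|\le[x,b(\omega)]^{-\frac{N-1}{2}}$ should be averaged in $\omega$, not taken pointwise. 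So I would instead write, using that the majorant $v$ depends only on $r=s_{xO}$ and Fubini,
\[
|\hat u_0(\lambda,\omega)|\le\int_{\H^N}|u_0(x)|\,|h_{\lambda,\omega}(x)|\,d\mu_x
\le\int_0^\infty v(r)\Big(\int_{\S^{N-1}}(\cosh r-\sinh r\,\langle\theta,\omega\rangle)^{-\frac{N-1}{2}}\,d\theta\Big)(\sinh r)^{N-1}\,dr,
\]
and the inner $\theta$-integral is, up to the normalizing constant $c_N$, exactly $\Phi_0(r)$ as defined in the preliminaries (the elementary spherical function at $\lambda=0$), which is independent of $\omega$. Thus
\[
|\hat u_0(\lambda,\omega)|\le C\int_0^\infty v(r)\,\Phi_0(r)\,(\sinh r)^{N-1}\,dr.
\]

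Finally I would invoke the estimate \eqref{estimate-symmetric} from the preliminaries, $|\Phi_0(r)|\le Ce^{-\frac{N-1}{2}r}(1+r)$, together with $(\sinh r)^{N-1}\le Ce^{(N-1)r}$; the product $\Phi_0(r)(\sinh r)^{N-1}$ is then bounded for small $r$ and grows like $r\,e^{\frac{N-1}{2}r}$ for large $r$, which is exactly the weight built into the hyperbolic volume element, so $\Phi_0(r)(\sinh r)^{N-1}\,dr$ is (a constant times) the pushforward of $d\mu_x$ under $x\mapsto s_{xO}$ weighted by $\Phi_0$, hence $\int_0^\infty v(r)\Phi_0(r)(\sinh r)^{N-1}dr$ is comparable to $\int_{\H^N} v(s_{xO})\,d\mu_x\cdot\sup(\Phi_0(r)e^{\frac{N-1}{2}r}/(1+r))$ — wait, more simply, since $v\in L^1(\H^N)$ means precisely $\int_0^\infty v(r)(\sinh r)^{N-1}dr<\infty$ (after normalizing $|\S^{N-1}|$), and $\Phi_0(r)\le C$ fails for large $r$ where it decays, we actually have $\Phi_0(r)\le C$ uniformly for $r\ge 0$ is false, but $\Phi_0$ is bounded on $[0,\infty)$ since it decays; indeed $\Phi_0(0)=1$ and $\Phi_0(r)\to 0$, and $\Phi_0$ is continuous, so $\|\Phi_0\|_{L^\infty}<\infty$. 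Therefore
\[
|\hat u_0(\lambda,\omega)|\le C\|\Phi_0\|_{L^\infty}\int_0^\infty v(r)\,(\sinh r)^{N-1}\,dr=C'\|v\|_{L^1(\H^N)}<\infty,
\]
uniformly in $(\lambda,\omega)$, which is the claim. The only mild subtlety — the ``main obstacle'' — is recognizing that the pointwise bound on $h_{\lambda,\omega}$ must be integrated in $\omega$ to produce $\Phi_0$ before using the sharp decay \eqref{estimate-symmetric}; a pointwise-in-$\omega$ estimate would lose the decay and fail.
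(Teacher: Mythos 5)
Your proof is correct and follows essentially the same route as the paper: bound $|u_0|$ pointwise by the radial majorant $v$, integrate the $\lambda$-independent modulus $|h_{\lambda,\omega}(x)|=[x,b(\omega)]^{-\frac{N-1}{2}}$ over the angular variable to produce $\Phi_0$, and use that $\Phi_0$ is uniformly bounded (via \eqref{estimate-symmetric}) to get $|\hat u_0(\lambda,\omega)|\le C\|v\|_{L^1(\H^N)}$. The only difference is cosmetic: the paper quotes Tataru's Lemma 2.2(a) for the identity $\int_{\S^{N-1}}|h_{-\lambda,\omega}(x)|\,d\omega=\Phi_0(x)$, whereas you derive it directly from the positivity of $[x,b(\omega)]$ (your momentary detour about whether $\Phi_0$ is bounded is resolved correctly and does not affect the argument).
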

\begin{proof}
There holds,
\[\begin{aligned}
|\hat u_0(\lambda,\omega)|&\le \int_0^\infty\int_{\S^{N-1}}|u_0(\cosh\rho,\sigma\,\sinh\rho)||h_{-\lambda,\omega}(
\cosh\rho,\sigma\,\sinh\rho)|\big(\sinh\rho\big)^{N-1}\,d\sigma\,d\rho\\
&\le \int_0^\infty v(\rho)\int_{\S^{N-1}}|h_{\lambda,\omega}(\cosh\rho,\sigma\sinh\rho)\,d\sigma\,(\sinh\rho)^{N-1}\,d\rho.
\end{aligned}
\]
The observation that, by \cite{Tataru:Strichartz-hyperbolic}, Lemma 2.2 (a),
\[\int_{\S^{N-1}}|h_{-\lambda,\omega}(\cosh\rho,\sigma\,\sinh\rho)|\,d\omega=\Phi_0(x)
\le C,
\]
ends the proof.
\end{proof}

Now we state our main result, that is the profile when $t\to +\infty$ of the convolution equation is the same one as the one for the translated Heat-Beltrami equation:

\begin{theo}\label{theorem-hyperbolic} Let $u$ be the solution of \eqref{P5} and $v$ the solution of \eqref{Q}, with initial condition $u_0\in L^\infty(\mathbb H^N)\cap L^1(\mathbb H^N)$ such that $\hat u_0\in L^\infty(\R\times\S^{N-1})$ (observe that this is the case if, for instance, $u_0$ is radially symmetric).

 Then
$$\lim_{t\to\infty} e^{(1-a)t}t^{3/2}\sup_{x\in\mathbb H^N}|u(x,t)-v(x,t)|=0.$$
\end{theo}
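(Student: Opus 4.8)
The plan is to pass to the Fourier transform on $\mathbb H^N$, extract the elementary term $e^{-t}u_0$, split the frequency variable $\lambda$ into a low and a high band, and compare the low band with the heat kernel $K_0$ of Proposition \ref{prop-heat}.

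\emph{Reduction to a kernel estimate.} Taking Fourier transforms in \eqref{P5} and \eqref{Q} (using $\widehat{J*u}=\hat J\,\hat u$ and $\widehat{\Delta_{\mathbb H^N}u}=-(\lambda^2+\frac{(N-1)^2}{4})\hat u$) gives $\hat u=\hat u_0\,e^{(\hat J(\lambda)-1)t}$ and $\hat v=\hat u_0\,e^{(a-1-b\lambda^2)t}$. Since $\hat J$ is entire and decays faster than any power of $|\lambda|$, the multiplier $e^{(\hat J(\lambda)-1)t}-e^{-t}=e^{-t}(e^{\hat J(\lambda)t}-1)$ is rapidly decreasing, so the radial kernel $G(\rho,t):=\int_{\mathbb R}(e^{(\hat J(\lambda)-1)t}-e^{-t})\,k_\lambda(\rho)\,d\lambda$ is defined by an absolutely convergent integral; here I use the crude bound $|k_\lambda(\rho)|\le \Phi_0(\rho)\,|c(\lambda)|^{-2}\le C(1+|\lambda|)^{N-1}$, uniform in $\rho$ because $\int_{\mathbb S^{N-1}}|\bar h_{\lambda,\omega}(x)|\,d\omega=\Phi_0(x)\le C$, together with $|c(\lambda)|^{-2}\le C\lambda^2$ for small $\lambda$. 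Since the multiplier $m\equiv1$ corresponds to the identity operator we have $\int_{\mathbb R}k_\lambda(\rho)\,d\lambda=\delta_O$ in the distributional sense (the $\lambda$-integrated form of \eqref{inversion1}--\eqref{k}), and by \eqref{inversion1} one obtains $u(x,t)=e^{-t}u_0(x)+\int_{\mathbb H^N}u_0(x')\,G(s_{xx'},t)\,d\mu_{x'}$, while Proposition \ref{prop-heat} gives $v(x,t)=e^{(a-1)t}\int_{\mathbb H^N}u_0(x')\,K_0(s_{xx'},t)\,d\mu_{x'}$. Writing $D(\rho,t):=e^{(1-a)t}G(\rho,t)-K_0(\rho,t)=\int_{\mathbb R}(e^{(\hat J(\lambda)-a)t}-e^{-at}-e^{-b\lambda^2 t})\,k_\lambda(\rho)\,d\lambda$, this yields
$$e^{(1-a)t}\,\sup_{x\in\mathbb H^N}|u(x,t)-v(x,t)|\le e^{-at}\|u_0\|_{L^\infty}+\|u_0\|_{L^1}\,\sup_{\rho\ge0}|D(\rho,t)|,$$
so, as $a>0$, it suffices to prove $\sup_\rho|D(\rho,t)|=o(t^{-3/2})$.

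\emph{Low frequencies.} Fix $\delta>0$ small enough that $|f(\lambda)|\le b/2$ on $|\lambda|\le\delta$, where $\hat J(\lambda)=a-b\lambda^2+\lambda^2 f(\lambda)$ with $f(\lambda)=O(\lambda^2)$. On this range $e^{(\hat J(\lambda)-a)t}-e^{-b\lambda^2 t}=e^{-b\lambda^2 t}(e^{\lambda^2 f(\lambda)t}-1)$ with $|e^{\lambda^2 f(\lambda)t}-1|\le C\lambda^4 t\,e^{b\lambda^2 t/2}$, while $|k_\lambda(\rho)|\le C\lambda^2$; combining with $e^{-at}\int_{|\lambda|\le\delta}|k_\lambda(\rho)|\,d\lambda\le Ce^{-at}$ and a Gaussian integral,
$$\Big|\int_{|\lambda|\le\delta}(e^{(\hat J(\lambda)-a)t}-e^{-at}-e^{-b\lambda^2 t})\,k_\lambda(\rho)\,d\lambda\Big|\le C\,t\!\int_{\mathbb R}\lambda^6 e^{-b\lambda^2 t/2}\,d\lambda+Ce^{-at}\le C\,t^{-5/2}+Ce^{-at},$$
uniformly in $\rho$. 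This is exactly where the $t^{-3/2}$ leading behaviour common to $u$ and $v$ cancels.

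\emph{High frequencies: the main obstacle.} For $|\lambda|>\delta$ write $e^{(\hat J(\lambda)-a)t}-e^{-at}-e^{-b\lambda^2 t}=e^{-at}(e^{\hat J(\lambda)t}-1)-e^{-b\lambda^2 t}$; the $e^{-b\lambda^2 t}$ term contributes at most $C\int_{|\lambda|>\delta}e^{-b\lambda^2 t}(1+|\lambda|)^{N-1}\,d\lambda\le Ce^{-b\delta^2 t/2}$. The delicate term is $\int_{|\lambda|>\delta}e^{-at}(e^{\hat J(\lambda)t}-1)\,k_\lambda(\rho)\,d\lambda$: the Fourier integral for $u$ is only conditionally convergent at infinity (its multiplier tends to $e^{-t}\neq0$ while $|c(\lambda)|^{-2}\sim|\lambda|^{N-1}$), which is why the $e^{-t}u_0$ term had to be extracted first. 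Two facts about $\hat J$ close the estimate. First, its rapid decay: once $|\hat J(\lambda)|\le a/2$ one has $|e^{\hat J(\lambda)t}-1|\le C\,t\,|\hat J(\lambda)|\,e^{at/2}$, so that tail is absolutely convergent and, with the prefactor $e^{-at}$, of size $O(t\,e^{-at/2})$. Second, $a=\hat J(0)$ is the \emph{strict} global maximum of $\hat J$: since $J\ge0$ is radial and not identically zero, $|\hat J(\lambda)|\le\int_{\mathbb H^N}J\,\Phi_0\,d\mu=a$ with equality only at $\lambda=0$ (because $|\Phi_{-\lambda}(\rho)|<\Phi_0(\rho)$ for $\lambda\neq0$ and $\rho>0$, which is visible from $\Phi_\lambda(\rho)=\frac{c_N}{\sinh^{N-2}\rho}\int_{-\rho}^\rho e^{i\lambda s}(\cosh\rho-\cosh s)^{\frac{N-1}{2}-1}\,ds$), hence $\sup_{|\lambda|\ge\delta}\hat J(\lambda)=a-\eta<a$ for some $\eta>0$ and $|e^{\hat J(\lambda)t}-1|\le Ce^{(a-\eta)t}$ on the bounded annulus $\delta\le|\lambda|\le R$. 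Putting these together, $\int_{|\lambda|>\delta}e^{-at}|e^{\hat J(\lambda)t}-1|\,|k_\lambda(\rho)|\,d\lambda\le C_\delta\,t\,e^{-\eta' t}$ for some $\eta'>0$, and collecting both bands gives $\sup_\rho|D(\rho,t)|\le C\,t^{-5/2}+Ce^{-b\delta^2 t/2}+C_\delta\,t\,e^{-\eta' t}+Ce^{-at}=o(t^{-3/2})$, which finishes the proof. Everything except this high-frequency control is a Gaussian estimate together with the bounds on $c(\lambda)$ and $\Phi_0$.
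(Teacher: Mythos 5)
Your argument is correct, and although it shares the paper's broad strategy (Fourier multipliers on $\mathbb{H}^N$, a low/high frequency split, pointwise kernel bounds), your decomposition is genuinely different and buys something. The paper writes $u-v$ through the kernel $K(\rho,t)=\int\big[e^{(\hat J(\lambda)-a)t}-e^{-b\lambda^2 t}\big]k_\lambda(\rho)\,d\lambda$, whose high-frequency part is only formally defined because $e^{(\hat J(\lambda)-a)t}\to e^{-at}\neq 0$ as $|\lambda|\to\infty$; the troublesome piece $K_{122}$ is then handled by returning to the Fourier side and reconstructing $e^{-at}u_0(x)$ through the inversion formula, and this is exactly where the hypothesis $\hat u_0\in L^\infty(\mathbb{R}\times\mathbb{S}^{N-1})$ enters. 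By peeling off $e^{-t}u_0(x)$ at the start, your remaining multiplier $e^{-at}(e^{\hat J(\lambda)t}-1)$ is rapidly decreasing, every kernel integral converges absolutely, and the hypothesis on $\hat u_0$ is never used, so your proof in fact gives the theorem for all $u_0\in L^1\cap L^\infty$. You also bypass Lemmas \ref{lemma-derivative1}--\ref{lemma-integral} (which produce the $(\sinh\rho)^{-\frac{N-1}{2}}$ spatial weights) by the cruder uniform bound $|k_\lambda(\rho)|\le C|c(\lambda)|^{-2}\le C\min\{\lambda^2,(1+|\lambda|)^{N-1}\}$, legitimate since $k_\lambda(\rho)=c\,|c(\lambda)|^{-2}\Phi_\lambda(\rho)$ and $|\Phi_\lambda|\le\Phi_0\le C$ by \eqref{estimate-symmetric}; because $u_0\in L^1$, only a sup-in-$\rho$ bound of size $o(t^{-3/2})$ is required, so the weighted estimates (needed in Proposition \ref{prop-heat}, which you simply quote) are dispensable here. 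Your fixed cutoff $\delta$, combined with the factor $\lambda^2$ from $|c(\lambda)|^{-2}$ near $\lambda=0$, even yields the quantitative rate $O(t^{-5/2})$ for the main term where the paper's moving cutoff $\tau(t)=t^{-1/2+\varepsilon}$ gives $o(t^{-3/2})$, and your verification that $\sup_{|\lambda|\ge\delta}\hat J(\lambda)\le a-\eta<a$ via $|\Phi_\lambda(\rho)|<\Phi_0(\rho)$ supplies a point the paper only asserts (``$\hat J(\lambda)-a\le-\delta$ for $|\lambda|>R$''). Two small polish items: justify $u=e^{-t}u_0+G(\cdot,t)*u_0$ via the series $u=e^{-t}\sum_{n\ge 0}\frac{t^n}{n!}J^{*n}*u_0$, so that $G=e^{-t}\sum_{n\ge1}\frac{t^n}{n!}J^{*n}$ is an honest radial $L^1\cap L^2$ function whose spherical transform is your multiplier, rather than invoking $\int k_\lambda\,d\lambda=\delta_O$ distributionally; and record that $\hat J$ is real-valued on $\mathbb{R}$ (since $\Phi_\lambda$ is real and even in $\lambda$), which your exponential bound on the annulus $\delta\le|\lambda|\le R$ tacitly uses.
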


\begin{proof}
Taking Fourier transform and using the properties of the convolution on hyperbolic space, the solutions of \eqref{P5} and \eqref{Q} may be written, respectively, as
\begin{equation*}\label{Fourier-u}
\hat u(\lambda,\omega,t)=\hat u_0(\lambda,\omega)e^{(\hat J(\lambda)-1)t}
\end{equation*}
and
\begin{equation*}
\hat v(\lambda,\omega,t)=\hat u_0(\lambda,\omega)e^{(a-1-b\lambda^2) t}.
\end{equation*}
Looking at the expressions for $\hat u$ and $\hat v$, from \eqref{inversion1} we can conclude that
\begin{equation*}
\begin{split}
u(x,t)-v(x,t)&=
\int_{\hh^N} \int_{-\infty}^{\infty} \left[e^{(\hat J(\lambda)-1)t}-e^{(a-1-b\lambda^2)t}\right] k_\lambda(s_{xx'})\,u_0(x')\; d\lambda\,d\mu_{x'}\\
&=e^{(a-1)t}\int_{\hh^N} K(s_{xx'},t) \,u_0(x')\,d\mu_{x'},
\end{split}
\end{equation*}
where
\begin{equation*}
K(\rho,t)=\int_{-\infty}^{\infty} \left[e^{(-b\lambda^2+\lambda^2f(\lambda))t}-e^{-b\lambda^2t}\right] k_\lambda(\rho)\; d\lambda.
\end{equation*}

In order to get the result, we split the integral into two regions: \begin{equation*}
K_1(\rho,t):=\int_{|\lambda|\geq \tau(t)}
\left[e^{(-b\lambda^2+\lambda^2f(\lambda))t}-e^{-b\lambda^2t}\right] k_\lambda(\rho) \, d\lambda
\end{equation*}
and
\begin{equation*}
K_2(\rho,t):=\int_{|\lambda|<\tau(t)}
\left[e^{(-b\lambda^2+\lambda^2f(\lambda))t}-e^{-b\lambda^2t}\right] k_\lambda(\rho) \, d\lambda
\end{equation*}
for some $\tau(t)$ chosen as $\tau(t)=t^{-1/2+\varepsilon}$ for some $\varepsilon$ small enough.
We claim that, both in the even and in the odd case,
\begin{equation}\label{K2}
\begin{split}
&t^{3/2}\sup_{0<\rho\leq 1} \left\{|K_2(\rho,t)|\right\}\to 0, \quad \text{as}\quad t\to +\infty,\\
&t^{3/2}\sup_{\rho\geq 1} \left\{ (\sinh \rho)^{\frac{N-1}{2}}|K_2(\rho,t)|\right\}\to 0, \quad \text{as}\quad t\to +\infty.
\end{split}
\end{equation}

Now we prove this claim. Let us start with $N$ odd. In this case
\[
k_\lambda(\rho)=c\Big(\frac{\partial_\rho}{\sinh \rho}\Big)^{\frac{N-1}2}\cos\lambda\rho
\]
so that, by Lemma 1.2 and Remark 1.1, if $\ep$ is small enough,
\[
\begin{aligned}
|K_2(\rho,t)|&\le\int_{|\lambda|<\tau(t)}e^{-b\lambda^2 t}\big|e^{\lambda^2f(\lambda)t}-1\big|\Big|\Big(\frac{\partial_\rho}{\sinh \rho}\Big)^{\frac{N-1}2}\cos\lambda\rho\Big|\,d\lambda\\
&\le C\int_{|\lambda|<\tau(t)}e^{-b\lambda^2 t}\lambda^2f(\lambda)t\sum_{n=1}^{N-1}\lambda^n\, d\lambda \ \ \begin{cases}
1\quad&0<\rho\le1,\\
\frac1{(\sinh\rho)^{\frac{N-1}2}}\quad&\rho>1
\end{cases}
\end{aligned}
\]
Notice that,
\[\begin{aligned}
t\int_{|\lambda|<\tau(t)}&e^{-b\lambda^2 t}\lambda^{n+2}f(\lambda)\,d\lambda\le Ct\int_{|\lambda|<\tau(t)}e^{-b\lambda^2 t}\lambda^{n+4}\,d\lambda\\
&=\frac {Ct}{(bt)^{\frac {n+5}2}}\int_{|\eta|<\sqrt{bt}\tau(t)}\eta^{n+4}e^{-\eta^2}\,d\eta
\le \frac {C}{t^{\frac {n+3}2}}.
\end{aligned}
\]
Therefore, \eqref{K2} holds.

If $N$ is even we obtain similar bounds by using Lemma \ref{lemma-integral}. In fact,
\[
|K_2(\rho,t)|\le I_n(\rho)\sum_{n=1}^N\int_{|\lambda|<\tau(t)}e^{-b\lambda^2 t}\big|e^{\lambda^2f(\lambda)t}-1\big|\lambda^n\,d\lambda
\]
where
\[
I_n(\rho)=\int_\rho^\infty\frac{\sinh s}{\sqrt{\cosh s-\cosh \rho}}\, h_n(s)\,ds
\]
with
\[
0\le h_n(s)\le c\ \ \begin{cases}1\quad&0<s\le 1,\\
\frac1{(\sinh s)^{\frac{N}2}}\quad&s\ge 1.
\end{cases}
\]

\medskip

Now, in order to estimate
\[
\int_{\hh^N} K_1(s_{xx'},t) \,u_0(x')\,d\mu_{x'}
\]
we split the kernel $K_1$ as $K_1=K_{11}+K_{12}$ with

\begin{equation}\label{K11}
K_{11}(\rho,t)=-\int_{|\lambda|\geq \tau(t)} e^{-b\lambda^2 t} k_\lambda(\rho) \; d\lambda.
\end{equation}
and
\begin{equation*}
K_{12}(\rho,t)=\int_{|\lambda|\geq \tau(t)} e^{(\hat J(\lambda)-a)t} k_\lambda(\rho) \; d\lambda.
\end{equation*}

First we turn our attention to \eqref{K11}. In the case $N$ is odd,
proceeding as before, we get for the interval $0<\rho\le1$, that
\[
|K_{11}(\rho,t)|\le C\sum_{n=2}^{N-1}\int_{|\lambda|>\tau(t)}e^{-b\lambda^2t}\lambda^n\,d\lambda=C\sum_{n=2}^{N-1}
\frac1{(bt)^{\frac{n+1}2}}\int_{|\eta|>b^{1/2}t^\ep}\eta^n e^{-\eta^2}\,d\eta.
\]

So that,
\[t^{3/2}\sup_{0<\rho\le 1}|K_{11}(\rho,t)|\to0\quad\mbox{as}\quad t\to\infty.
\]

Now, if $\rho>1$, by Remark 1.1,
\[\begin{aligned}
|K_{11}(\rho,t)|&\le \frac{C}{(\sinh\rho)^{\frac{N-1}2}}\sum_{n=1}^{\frac{N-1}2}\int_{|\lambda|>\tau(t)}
e^{-b\lambda^2t}\lambda^n\,d\lambda\\
&=\frac{C}{(\sinh\rho)^{\frac{N-1}2}}\sum_{n=1}^{\frac{N-1}2}
\frac1{(bt)^{\frac{n+1}2}}\int_{|\eta|>b^{1/2}t^\ep}\eta^n e^{-\eta^2}\,d\eta\\
&\le \frac{C}{(\sinh\rho)^{\frac{N-1}2}}\ e^{-\frac12b^{1/2}t^\ep}\sum_{n=1}^{\frac{N-1}2}
\frac1{(bt)^{\frac{n+1}2}}\int\eta^n e^{-\eta^2/2}\,d\eta.
\end{aligned}
\]
And we get
\[t^{3/2}\sup_{\rho\ge 1}\ (\sinh\rho)^{\frac{N-1}2}|K_{11}(\rho,t)|\to0\quad\mbox{as}\quad t\to\infty.
\]

\medskip

Finally, we turn to the kernel $K_{12}$. We split it again as
\[
K_{12}=K_{121}+K_{122}
\]
where
\[
K_{121}(\rho,t)=\int_{\tau(t)<|\lambda|\leq R} e^{(\hat J(\lambda)-a)t} k_\lambda(\rho) \; d\lambda
\]
and
\[
K_{122}(\rho,t)=\int_{|\lambda|> R} e^{(\hat J(\lambda)-a)t} k_\lambda(\rho) \; d\lambda.
\]

We choose $R$ small so that, $\hat J(\lambda)-a\le -\tilde b\lambda^2$ if $|\lambda|\le R$ with $\tilde b>0$. Then, since $\hat J$ is analytic, $\hat J(\lambda)\to0$ as $|\lambda|\to\infty$  and $\hat J(\lambda)<a-\delta$ if $|\lambda|=R$ for some positive $\delta$, there holds that $\hat J(\lambda)-a\le -\delta$ if $|\lambda|>R$.

Let us consider the term $K_{121}$. There holds,
$$K_{121}(\rho,t)\leq \int_{\tau(t)<| \lambda|\leq R} e^{-\tilde b\lambda^2 t}
k_\lambda(\rho) \; d\lambda.$$
Proceeding as above we see that \eqref{K2} holds with $K_2$ replaced by $K_{121}$.

We have to treat the term $K_{122}$ in a different way. So, before we proceed let us see what we have up to now.
\[
|u(x,t)-v(x,t)|\le e^{-(1-a)t}\int_{\hh^N} |\tilde K(s_{xx'},t)| \,|u_0(x')|\,d\mu_{x'}
+e^{-(1-a)t}\Big|\int_{\hh^N} K_{122}(s_{xx'},t) \,u_0(x')\,d\mu_{x'}\Big|
\]
where $\tilde K=K-K_{122}$. By the estimates we have already proven,
\[\begin{aligned}
t^{3/2}\int_{\hh^N} &|\tilde K(s_{xx'},t)| \,|u_0(x')|\,d\mu_{x'}\le
o_t(1)\int_{\{s_{xx'}\le 1\}}|u_0(x')|\,d\mu_{x'}\\
&+o_t(1)\int_{\{s_{xx'}\ge 1\}}\frac{|u_0(x')|}{(\sinh s_{xx'})^{\frac{N-1}2}}\,d\mu_{x'}\le o_t(1),
\end{aligned}
\]
under our assumptions for the initial data $u_0$.

In order to finish the proof, let us estimate
\[A=\int_{\hh^N} K_{122}(s_{xx'},t) \,u_0(x')\,d\mu_{x'}.
\]
There holds,
\[\begin{aligned}
A&= \int_{\H^N}\int_{|\lambda|>R}e^{(\hat J(\lambda)-a)t}\int_{\S^{N-1}}h_{\lambda,\omega}(x)h_{-\lambda,\omega}(x')\frac{d\omega}
{|c(\lambda)|^2}\,d\lambda\ u_0(x')\,d\mu_{x'}\\
&=\int_{\S^{N-1}}\int_{|\lambda|>R}e^{(\hat J(\lambda)-a)t}\frac{h_{\lambda,\omega}(x)}{|c(\lambda)|^2}\int_{H^N}h_{-\lambda,\omega}(x')
u_0(x')\,d\mu_{x'},d\lambda\,d\omega\\
&=\int_{\S^{N-1}}\int_{|\lambda|>R}e^{(\hat J(\lambda)-a)t}{h_{\lambda,\omega}(x)}\hat u_0(\lambda,\omega)\frac{d\omega \,d\lambda}{|c(\lambda)|^2}\\
&=e^{-at}\int_{\S^{N-1}}\int_{|\lambda|>R}{h_{\lambda,\omega}(x)}\hat u_0(\lambda,\omega)\frac{d\omega\, d\lambda}{|c(\lambda)|^2}\\
&\qquad+\int_{\S^{N-1}}\int_{|\lambda|>R}e^{-at}\big(e^{\hat J(\lambda)t}-1\big){h_{\lambda,\omega}(x)}\hat u_0(\lambda,\omega)\frac{d\omega \, d\lambda}{|c(\lambda)|^2}=A_1+A_2
\end{aligned}
\]

Using that by hypothesis both $u_0$ and $\hat u_0$ are bounded, the fact that
\begin{equation}\label{formula100}
\int_{\S^{N-1}}|h_{\lambda,\omega}(x)|\,d\omega= \Phi_0(x)\le C\end{equation}
 by \eqref{estimate-symmetric}, the estimate
 $$|c(\lambda)|^{-2}\le C\lambda^2\quad\text{for}\quad\lambda \text{ bounded},$$
 and the formula
\[
\begin{aligned}
A_1=e^{-at}u_0(x)-e^{-at}\int_{\S^{N-1}}\int_{|\lambda|\le R}{h_{\lambda,\omega}(x)}\hat u_0(\lambda,\omega)\frac{d\omega \,d\lambda}{|c(\lambda)|^2},\end{aligned}
\]
it follows that,
\[
t^{3/2}|A_1(x,t)|\le o_t(1)\quad \text{when}\quad t\to +\infty.
\]

\medskip

 On the other hand, using $|c(\lambda)|^{-2}\le C_R|\lambda|^{N-1}$ for $|\lambda|>R$, the same formula \eqref{formula100}, and the fact that $\hat J$ decays faster than any negative power of $|\lambda|$ we get,
\[
\begin{aligned}
|A_2(x,t)|&\le t\,e^{-\delta t}\int_{\S^{N-1}}\int_{|\lambda|>R}|\hat J(\lambda)||h_{\lambda,\omega}(x)||\hat u_0(\lambda,\omega)|\frac{d\omega \, d\lambda}{|c(\lambda)|^2}\\
&\le C t e^{-\delta t}\int_{|\lambda|>R}|\lambda|^{N-1} |\hat J(\lambda)|\,d\lambda\le C t e^{-\delta t}.
\end{aligned}
\]
and the theorem is  proved.
\end{proof}

\subsection{Discussion}

From the previous subsections we find that, in order for the nonlocal problem to behave as the classical translated diffusion equation on hyperbolic space, namely as
\begin{equation}\label{heat-hyperbolic}
v_t=b\,\Big(\Delta_{\H^N}+\textstyle{\frac{(N-1)^2}4}\Big)v
\end{equation}
the kernel $J$ should be normalized so that
\begin{equation}\label{normalization}
a=\int_{\H^N}J(s_{xO})\Phi_0(x)\,d\mu_x=1.
\end{equation}

This normalization differs from the one that preserves mass. This is, to normalize $J$ so that $\int_{\H^N}J(s_{xO})\,d\mu_x=1$ (as a consequence of Fubini's Theorem this normalization implies mass preservation).

In Euclidean space, mass preservation and preservation of $\hat u(0,t)$ are two ways of stating the same fact. Let us show that  the normalization \eqref{normalization} implies that
\[
\int_{\H^N}u(x,t)h_{0,\omega}(x)\,d\mu_x=\hat u(0,\omega,t)
\]
is preserved for every $\omega\in \S^{N-1}$.

In fact,
$$\int_{\mathbb H^N} \big(J*u(\cdot,t)\big)(x)h_{0,\omega}(x)\,d\mu_{x}=\lp\int_{\mathbb H^N} J(s_{xO})h_{0,\omega}(x)\,d\mu_{x}\rp\lp \int_{\mathbb H^N} u(x,t)h_{0,\omega}(x)\,d\mu_{x}\rp.$$
The proof of this identity follows simply by considering the property \eqref{product-convolution} of the convolution on hyperbolic space --using that $J$ is radially symmetric-- and evaluating the Fourier transforms that appear at $\lambda=0$.

As a consequence, if we  impose  the normalization \eqref{normalization}, by multiplying equation \eqref{P5} by $h_{0,\omega}(x)$ and integrating on $\H^N$ we get
\[
\frac d{dt}\int_{\H^N} u(x,t) h_{0,\omega}(x)\,d\mu_x=\int_{\mathbb H^N} \big(J*u(\cdot,t)\big)(x)h_{0,\omega}(x)\,d\mu_{x}-\int_{\mathbb H^N} u(x,t)h_{0,\omega}(x)\,d\mu_{x}=0.
\]
This is, there is conservation of the integral
\begin{equation}\label{conserved1}\int_{\mathbb H^N} u(x,t)h_{0,\omega}(x)\,d\mu_{x}
\end{equation}
for every $\omega\in \S^{N-1}$. In addition, integrating on $\omega\in\mathbb S^{N-1}$ we have that
\begin{equation}\label{conserved2}
\int_{\mathbb H^N} u(x,t)\Phi_0(x)\,d\mu_{x}
\end{equation}
is also preserved.\\

Let us give a closer look at the Heat-Beltrami equation on hyperbolic space. First, observe that \eqref{conserved1} and \eqref{conserved2} are the natural quantities that are preserved for the classical diffusion equation \eqref{heat-hyperbolic} on hyperbolic space  (for the proof, just take Fourier transform and evaluate at $\lambda=0$).

 On the other hand, let us define the quantity
$$P(r,\theta,t)=v(r,\theta,t)\Phi_0(r)(\sinh r)^{N-1}.$$
Observe also that
$$\int_{\mathbb H^{N}} v(x,t)\Phi_0(x)\,d\mu_x=
\int_0^\infty \int_{\S^{N-1}}P(r,\theta,t)\,dr\,d\theta.$$

A straightforward calculation yields, from \eqref{heat-hyperbolic}, the equation
\begin{equation}\label{equation-P}
  P_t=\partial_r \left( P_r -\left[(N-1)\frac{\cosh r}{\sinh r} + 2\partial_r \log \Phi_0\right] P \right)+\frac{1}{\sinh^2 r}\,\Delta_{\mathbb S^{N-1}} P,
\end{equation}
from where we deduce again that $\int_{\mathbb H^{N}} v(x,t)\Phi_0(x)\,d\mu_x$ is constant in time.

  In equation \eqref{equation-P} we can recognize a drift term with velocity
$$V(r)=(N-1)\frac{\cosh r}{\sinh r} + 2\partial_r \log \Phi_0,$$
and, by the asymptotics $\Phi_0(r)=e^{-\frac{N-1}{2}r} (r+o(r))$ as $r\to\infty,$
we find that
$V(r)=\frac{2}{r}+o(r^{-1})$. Since the angular operator in \eqref{equation-P} is multiplied by an exponentially decaying factor, we can expect that the right hand side of \eqref{equation-P} is, at leading order,
\begin{equation*}
  \partial_r \left( P_r+\frac{2}{r}P\right)=\frac{1}{r^2}\partial_r \left( r^2 P_r\right),
\end{equation*}
which is the Euclidean Laplacian for radial functions in dimension 3. It is well known that the decay of the fundamental solution for the corresponding heat equation is $t^{-3/2}$, which motivates the results of the Subsection \ref{section-heat} about the decay of the diffusion on hyperbolic space.
\medskip


\begin{ack}
The authors would like to acknowledge the hospitality of the Newton Institute where part of this work was conducted.
\end{ack}

\medskip

\end{document}